\providecommand{\Fourier}{\mathcal{F}}
\definecolor{forestgreen}{rgb}{0.13, 0.55, 0.13}
\renewcommand{\phi}{\varphi}
\newcommand{\PP}{\mathbb{P}}
\newcommand{\E}{\mathbb{E}}
\newcommand{\R}{\mathbb{R}}
\newcommand{\N}{\mathbb{N}}
\newcommand{\V}{\mathbb{V}}
\numberwithin{equation}{section}
\newtheorem{theorem}{Theorem}[section]
\newtheorem{lemma}{Lemma}[section]
\newtheorem{definition}{Definition}[section]
\newtheorem{remark}{Remark}[section]
\newtheorem{example}{Example}[section]
\renewcommand{\tilde}{\widetilde}
\DeclareMathOperator{\lk}{L}
\renewcommand{\hat}{\widehat}
\newcommand{\Cov}{\textnormal{Cov}\, }
\providecommand{\eps}{\varepsilon}
\date{}
\begin{document}

\title{Adaptive directional estimator of the density in ${\mathbb R}^d$ for independent and mixing sequences}
\maketitle

\author{Sinda Ammous \footnote{Universit\'e Paris Cit\'e, CNRS,  UMR 8145, MAP5,
F-75006 Paris, France}, 
Jérôme Dedecker \footnote{Universit\'e Paris Cit\'e, CNRS,  UMR 8145, MAP5,
F-75006 Paris, France}, 
Céline Duval\footnote{Universit\'e de Lille, CNRS, UMR 8524 - Laboratoire Paul Painlev\'e, F-59000 Lille, France}
}
\begin{abstract} A new multivariate density estimator for  stationary sequences is obtained by  Fourier inversion of the thresholded empirical characteristic function. This estimator  does not depend on the choice of  parameters related to the smoothness of the density; it is directly adaptive. We establish oracle inequalities  valid for independent, $\alpha$-mixing and $\tau$-mixing sequences, which allows us to  derive optimal convergence rates, up to a logarithmic loss.   On general anisotropic Sobolev classes,  the estimator adapts to the regularity of the unknown density but also achieves directional adaptivity. In particular, if $A$ is an invertible matrix,  if the observations are drawn from $X\in\R^{d}$, $d\ge 1$, it achieves the rate implied by the regularity of  $AX$, which may be more regular than $X$. The estimator is easy to implement and numerically efficient. It depends on the calibration of a parameter for which we propose an innovative numerical selection procedure, using the Euler characteristic of the thresholded areas.
\end{abstract}

\noindent {\sc {\bf Keywords.}} {\small Adaptive procedure},  {\small Anisotropy},  {\small Density estimation},  {\small Dependence},  {\small Stationary sequences}, Fourier transform.\\
\noindent {\sc {\bf AMS Classification.}} Primary: 62G07, 62G20. Secondary: 62H12 .

\section{Introduction}

The problem of adaptive density estimation   was mainly studied in a context of i.i.d. (independent and identically distributed) random variables,  but also for dependent data since the late 90's. Given the considerable amount of articles on the subject, the following review does not pretend to be an exhaustive rendition of all the works on the topic. On minimax adaptive multivariate density estimation from direct independent  random vectors, we mention projection methods, with the preliminary work of Hasminskii and Ibragimov (1990) \cite{10.1214/aos/1176347736} or more recently Birgé (2014)  \cite{birge2014model}, wavelet techniques with Kerkyacharian and Picard (2000)  \cite{MR1821645} and kernel estimators investigated e.g. in Chac\'on and Duong (2010) \cite{chacon2010multivariate}, the works of Goldenshluger and Lepski   \cite{goldenshluger2011bandwidth,goldenshluger2014adaptive} or Rebelles (2015) \cite{10.1214/15-EJS986}. 

Let us look in more detail at  two recent papers. In \cite{comte2013anisotropic}, Comte and Lacour  consider the problem of deconvolution in ${\mathbb R}^d$, but their results also apply to the case where the variables are observed directly.  The authors propose adaptive density estimators for both the pointwise $\lk^2$-risk and the integrated $\lk^2$-risk, by adapting the method of Goldenshluger and Lepski. For the integrated $\lk^2$-risk, they obtain adaptive estimators on general anisotropic Sobolev classes, using tensor products of the sinus-cardinal (sinc) kernel. In \cite{lacour2017estimator}, Lacour {\it et al.} propose a new method for selecting the bandwidth of kernel density estimators. For a class of kernels described in 
\cite{goldenshluger2014adaptive} they obtain adaptive estimators for anisotropic Nikolskii classes and the integrated $\lk^2$-risk. 

 Regarding dependent sequences, 
 The only article that we know of that deals with adaptive multivariate density estimation in a mixing context, is that of Bertin {\it et al.} \cite{BKLP20} . These authors deal with density estimation on a bounded domain of ${\mathbb R}^d$, for geometrically $\beta$-mixing sequences (see Volkonski and Rozanov(1959) \cite{volkonskii1959some} for the  definition of $\beta$-mixing coefficients).
 
The other articles we  briefly describe below only concern the estimation of the density in the univariate case.
 Tribouley and Viennet (1998) \cite{tribouley1998lp} propose $\lk^{p}$ adaptive estimators for the common density $f$ of a stationary $\beta$-mixing sequence   using wavelets, that are minimax optimal (in the sense that the procedure leads to the same minimax rate as in the i.i.d. case). Considering discrete or continuous time stationary processes, Comte and Merlevède (2002) \cite{comte2002adaptive}  study the  adaptive density estimation of the common density $f$ of a $\alpha$-mixing or $\beta$-mixing process (see  Definition \ref{def alpha} below for the definition of $\alpha$-mixing coefficients in the sense of Rosenblatt  (1956) \cite{rosenblatt1956central}). 
They use  a penalized least square method  to compute the adaptive estimators. Comte {\it et al.} (2008) \cite{CDT2008} propose a model selection procedure for projection estimators on the Shannon basis to estimate $f$ on the whole real line for stationary $\beta$-mixing sequences.
Lerasle  \cite{lerasle2009adaptive,lerasle2011optimal} proposes an adaptive estimator based on model selection for the  density of a stationary process  which is either $\beta$-mixing or $\tau$-mixing (see Definition \ref{def tau} below for the definition of $\tau$-mixing coefficients).
Gannaz and Wintenberger (2010) \cite{gannaz2010adaptive} consider other type of dependence; they estimate the common marginal density $f$ by a wavelet type estimator, under some conditions on the  $\lambda$ or $\tilde{\phi}$  coefficients (see \cite{dedecker2007weak} for  their definition).  
Asin and Johannes (2017) \cite{asin2017adaptive}  give a data-driven non-parametric estimation procedure for a density and a regression function. They use an orthogonal series estimator for  $\beta$-mixing sequences. Bertin and Klutchnikoff (2017) \cite{bertin2017pointwise}  estimate the common  density $f$ of a weakly dependent process, giving upper bounds for the pointwise risk, and  propose a data-driven procedure based on Goldenshluger and Lepskii \cite{goldenshluger2011bandwidth} for $\alpha$-mixing and $\lambda$-dependent sequences.

We observe that many papers on adaptive density estimation in a dependent context deal with $\beta$-mixing sequences. The reason is a technical one: for $\beta$-mixing sequences, one can use Berbee's coupling lemma \cite{Berbee79}, which enables to go back to the i.i.d. case and to apply the powerful concentration inequalities of Talagrand  \cite{talagrand1996new}. This approach is much trickier in the $\alpha$-mixing case 
because the coupling tools for $\alpha$-mixing sequences are much less efficient (see for instance \cite{rio2017asymptotic} Chapter 5). 
$\tau$-mixing sequences have better coupling properties (see  \cite{dedecker2005new}), but they are quite complicated to handle because of their hereditary properties (individual functions of $\tau$-mixing sequences are not necessarily $\tau$-mixing).

Let us now briefly underline why the $\alpha$-mixing case is so attractive. It is well known that the notion of $\alpha$-mixing is the weakest type of mixing among the usual mixing conditions (see e.g. Bradley \cite{Bradley86}). It contains two large classes of examples: irreducible, aperiodic and positively recurrent Markov chains (for which $\beta$-mixing is equivalent to $\alpha$-mixing, see \cite{rio2017asymptotic}, Chapter 9), as well as functions of Gaussian processes which are naturally $\rho$-mixing (see \cite{Bradley86} for the definition of $\rho$-mixing). 

However, $\alpha$-mixing has some limitations:  it does not apply in general to non-irreducible Markov chains. For instance the Markov chains associated with most dynamical systems are not $\alpha$-mixing (see for instance \cite{dedecker2005new}). By contrast, the $\tau$-mixing coefficients can be computed for  a large class of dynamical systems, as well as for many other examples (such as functions of linear processes with discrete innovations).
Of course, other notions of dependence can also be defined: we refer to \cite{dedecker2005new} and   \cite{dedecker2007weak} for an overview.

The  purpose of this paper is to introduce a unified adaptive density estimator for multivariate stationary random variables that are independent, $\alpha$-mixing  or $\tau$-mixing (see Equation \eqref{eq:estfbis} below for the definition of the estimator). It is inspired by the recent adaptive procedure introduced by Duval and Kappus (2019) \cite{duval2019adaptive}  to select the optimal cutoff parameter for univariate density estimators based on a Fourier method, in the i.i.d. case. This procedure is numerically easy to implement and the concentration tool used to establish the oracle inequality is a basic Hoeffding inequality for partial sum of bounded random variables, which makes it eligible for an extension to the case  of dependent variables. More precisely, we shall use  Fuk-Nagev type inequalities,  that can be derived from  \cite{rio2017asymptotic} in the $\alpha$-mixing case (see Lemma \ref{lem:FG}), and from  \cite{DP04} in the $\tau$-mixing case (see Lemma \ref{lem:FGtau}). 
Equation \eqref{eq:estfbis} below proposes a complete rewriting of the estimator of \cite{duval2019adaptive} for multivariate random variables.

The contributions of the present work are the following.
We propose a unique thresholded estimation procedure which is  parameter free. Indeed, if the procedure depends on a thresholding constant  (unrelated to the smoothness of the unknown density), this constant can be easily calibrated  through an innovative tool using the Euler characteristic of thresholded area.  Oracle inequalities are established for independent and dependent sequences of multivariate random variables (which seems quite new in a dependent context, in particular regarding the adaptivity to anisotropic regularity classes).  The proofs of the oracle inequalities rely on fine cuttings of the quadratic risk, which makes it elegant and easy to adapt in other contexts such as indirect observations. The bias term appearing in these inequalities has such a  general shape that it allows anisotropic regularity classes with a possible  change of base (not necessarily orthonormal) that may lead to faster convergence rates. A property that, to the knowledge of the authors, has not been studied in the literature and is not shared with existing estimators.  We call  this property ``Directional Adaptivity''.\\

The paper is organized as follows. Section \ref{sec:iid} is devoted to the multivariate  independent case;  we first define our adaptive estimator and state Theorem \ref{thm:AD} which is an oracle inequality. Some extensions are discussed as well as a comparison with wavelet thresholded estimators. Rates of convergence are derived on general anisotropic Sobolev classes, where a linear bijective transformation of the data is allowed (see Equation \eqref{eq:Sob}). As emphasized by Example \ref{ex:Ref} this transformation can improve the rates of convergence. The remaining of the Section is devoted to its numerical study. In Section \ref{sec:kappa} we propose a numerical procedure for the calibration of the only  parameter appearing in the definition of the estimator, using the Euler characteristic of the thresholded areas. Section \ref{sec:simiid} provides  numerical illustrations in dimension 2. Section \ref{sec:dep} generalizes Theorem \ref{thm:AD} for stationary sequences that are either $\alpha$-mixing (see Definition \ref{def alpha} and Theorem \ref{thm:ADalpha}) or $\tau$-mixing (see Definition \ref{def tau} and Theorem \ref{thm:ADtau}). In each case the numerical performances of the estimator, combined with the selection procedure of Section \ref{sec:kappa}, are investigated in dimension 1. All the proofs are  gathered in Section \ref{sec:prf}.

\paragraph{Notations}
Given a random variable $Z$ taking values in $\R^{d}$,
$\phi_Z(u)=\E[e^{i\langle u, Z\rangle} ]
$ denotes the characteristic function of $Z$. For $f\in \lk^1(\R^{d})$, $$\Fourier f(u) =\int _{\R^{d}}
e^{i\langle u,x\rangle } f(x) {\rm d}x, \quad u\in\R^{d},$$
 is understood to be the Fourier transform of $f$.  Moreover, we denote by $\| \cdot  \|$ the $\lk^2$-norm of functions, $\| f\|^2:= \int _{\R^{d}}|f(x)|^2 {\rm d}x$.  

Let ${\mathcal A}$ be the class of invertible $d\times d$ matrices such that $A([-1,1]^d) \subseteq [-1,1]^d$. Note that, if $A \in {\mathcal A}$, then $|\text{det}(A)|\leq 1$. A necessary and sufficient condition for an invertible matrix $A$ to belong to ${\mathcal A}$ is that the $\ell_1$ norm of each row is less than 1.
Given $f\in \lk^1(\R^{d}) \cap \lk^2(\R^{d})$ and  $A \in {\mathcal A}$, we denote by $f_{A,m}$, $m\in\R^{d}_{+}$, the  uniquely defined function with Fourier transform $\Fourier f_{A,m}= (\Fourier f)\mathbf{1}_{A([-m,m])}$, where $[-m,m]=[-m_{1},m_{1}]\times\cdots\times[-m_{d},m_{d}]$.  The interest of introducing the matrix $A$ is to capture directional adaptivity and consider general anisotropy classes as defined in \eqref{eq:Sob} (see also Example \ref{ex:Ref}).

\section{Directional adaptive procedure: the independent case \label{sec:iid}}

\subsection{Adaptive  thresholded estimator}

Consider $n$ i.i.d. realisations $X_j,\, 1\leq j\leq n,$ of a ${\mathbb R}^d$-valued random variable $X$, with Lebesgue-density $f$.  We build an estimator $\hat\phi_{X,n}$ of  $\phi_{X}$ as follows\[\hat\phi_{X,n}(u)=\frac{1}{n}\sum_{j=1}^{n}e^{i\langle u,X_{j}\rangle},\ u\in\R^{d}.\]
Using Fourier inversion,  we define  an estimator of $f$ as follows
\begin{align}\label{eq:estfbis}\hat{f}_n(x)  = \frac{1}{(2\pi)^d} \int_{[-n,n]^{d}}e^{- i\langle u,x\rangle} \tilde{\phi}_{X,n}(u) {\rm d}u,\quad x\in\R^{d}. 
\end{align}
where for  $\kappa_{n}:=(1+\kappa\sqrt{\log n})$, for some positive $\kappa$, we set 
\begin{align}\label{eq:phiYcut}
\tilde{\phi}_{X,n}(u)=\hat{\phi}_{X,n}(u) \mathbf{1}_{\{|\hat{\phi}_{X,n}(u)|\geq {\kappa}_n n^{-1/2}\}},\quad u\in\R^{d}.
\end{align} The quantity in \eqref{eq:estfbis} has no reason to be positive therefore in practice we take $\hat{f}_n=\big($Re$(\hat{f}_n)\big)_{+}$. We underline that contrary to classical Fourier estimators the cutoff parameter is taken equal to $n$ and that adaptivity will be obtained by thresholding the estimated characteristic function. Therefore the only parameter in \eqref{eq:estfbis} that requires a choice is the constant $\kappa$ appearing in the threshold. This quantity plays a role only in the order of remainder terms (see Theorem \ref{thm:AD}). This is why we choose not to make explicit the dependency in $\kappa$ of $\hat f_{n}$ as the choice of $\kappa$ is not related to the smoothness of $f$.    In Section \ref{sec:kappa}, we propose an innovating adaptive procedure to select $\kappa$ using the Euler characteristic of the set $\{u, |\hat{\phi}_{X,n}(u)|\geq {\kappa}_n n^{-1/2}\}$, which performs quite well numerically.

\begin{theorem}\label{thm:AD}
Let $ \kappa > 0$. The following inequality holds
\begin{multline*}
\E[  \| \hat{f}_n- f\|^2 ]  \leq  \underset{m\in[0,n]^{d}, A \in {\mathcal A}}{\inf}\left( 18
\|f-f_{A,m}\|^{2}
+\frac{\left (10+2(1+(\kappa+2) \sqrt{\log n})^{2}\right )|\mathrm{det}(A)|}{\pi^d n}  m_{1}\cdots m_{d}  \right)\\+  \frac{64}{\pi^d}n^{d - \kappa^2/4}.    
\end{multline*}
\end{theorem}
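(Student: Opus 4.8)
The plan is to pass to the Fourier domain and control the error frequency by frequency. Since the estimator satisfies $\Fourier\hat f_n = \tilde\phi_{X,n}\mathbf 1_{[-n,n]^d}$ while $\Fourier f = \phi_X$, Plancherel's identity gives $(2\pi)^d\,\E\|\hat f_n-f\|^2 = \int_{\R^d}\E\big|\tilde\phi_{X,n}(u)\mathbf 1_{[-n,n]^d}(u)-\phi_X(u)\big|^2\,\dd u$. Writing $\eta_n:=\kappa_n n^{-1/2}$ for the threshold and using that the ``keep'' event $\{|\hat\phi_{X,n}(u)|\ge\eta_n\}$ and its complement, the ``kill'' event, are disjoint, one obtains the exact pointwise decomposition $\E|\tilde\phi_{X,n}(u)-\phi_X(u)|^2 = \E\big[|\hat\phi_{X,n}(u)-\phi_X(u)|^2\mathbf 1_{\{\mathrm{keep}\}}\big] + |\phi_X(u)|^2\,\PP(\mathrm{kill})$, valid for $u\in[-n,n]^d$, whereas outside the box the integrand is simply $|\phi_X(u)|^2$.

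The only probabilistic input I would need is concentration of $\hat\phi_{X,n}(u)$ around $\phi_X(u)$. Splitting $e^{i\langle u,X_j\rangle}$ into its real and imaginary parts, each bounded in $[-1,1]$, and applying Hoeffding's inequality to the two empirical means (here the independence of the sample enters) gives a deviation bound of the form $\PP(|\hat\phi_{X,n}(u)-\phi_X(u)|>t)\le 4\exp(-nt^2/4)$, together with the variance estimate $\E|\hat\phi_{X,n}(u)-\phi_X(u)|^2=(1-|\phi_X(u)|^2)/n\le 1/n$. Evaluating the deviation bound at $t=\eta_n$ yields $4\exp(-\kappa_n^2/4)\le 4\,n^{-\kappa^2/4}$, which is the origin of the additive remainder in the theorem.

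From the pointwise decomposition I would then derive two universal estimates by comparing $|\phi_X(u)|$ with a suitable multiple of $\eta_n$. A variance-type bound, $\E|\tilde\phi_{X,n}(u)-\phi_X(u)|^2\le c_1\eta_n^2 + 4n^{-\kappa^2/4}$: when $|\phi_X(u)|$ is below the comparison level the ``kill'' term $|\phi_X(u)|^2\PP(\mathrm{kill})$ is directly of order $\eta_n^2$, while when $|\phi_X(u)|$ exceeds it the ``kill'' event forces a deviation of size at least $\eta_n$ and is absorbed by the concentration bound; the ``keep'' term is always at most $1/n\le\eta_n^2$ since $\kappa_n\ge 1$. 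Symmetrically, a bias-type bound, $\E|\tilde\phi_{X,n}(u)-\phi_X(u)|^2\le c_2|\phi_X(u)|^2 + 4n^{-\kappa^2/4}$: the ``kill'' term is at most $|\phi_X(u)|^2$, and the ``keep'' term is either at most $1/n\le 4|\phi_X(u)|^2$ (large signal) or again a large-deviation term (small signal).

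Finally I would fix $A\in\mathcal A$ and $m\in[0,n]^d$ and set $B:=A([-m,m])$, which is contained in $[-n,n]^d$ because $A([-1,1]^d)\subseteq[-1,1]^d$ and $m\in[0,n]^d$. Splitting $\R^d=B\cup([-n,n]^d\setminus B)\cup(\R^d\setminus[-n,n]^d)$, I would apply the variance bound on $B$, the bias bound on $[-n,n]^d\setminus B$, and use the exact value $|\phi_X|^2$ outside the box. The two bias contributions combine, since $B\subseteq[-n,n]^d$, into $(c_2+1)\int_{\R^d\setminus B}|\phi_X|^2 = (c_2+1)(2\pi)^d\|f-f_{A,m}\|^2$; the variance contribution equals $c_1\eta_n^2|B|=c_1\eta_n^2|\det A|2^d m_1\cdots m_d$; and the remainders, integrated over the box of volume $(2n)^d$, produce a term proportional to $n^{d-\kappa^2/4}$. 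Dividing by $(2\pi)^d$ and taking the infimum over $A$ and $m$ gives the stated inequality. The architecture is robust, and it is exactly this frequency-by-frequency splitting that later carries over to the $\alpha$- and $\tau$-mixing cases once Hoeffding is replaced by the Fuk--Nagaev inequalities; the genuinely delicate point here is purely quantitative --- extracting the precise constants $18$, $10$, $64$ and the inflated threshold constant $\kappa+2$ requires choosing the comparison levels between $|\phi_X(u)|$ and $\eta_n$ with care and applying $|a+b|^2\le 2|a|^2+2|b|^2$ without slack.
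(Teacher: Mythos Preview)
Your proposal is correct and follows essentially the same approach as the paper: Plancherel to pass to Fourier space, the exact keep/kill decomposition of $\E|\tilde\phi_{X,n}-\phi_X|^2$, Hoeffding's inequality (the paper's Lemma~4.1) for the deviation of $\hat\phi_{X,n}$, the variance bound $\E|\hat\phi_{X,n}-\phi_X|^2\le 1/n$, and the splitting of $[-n,n]^d$ into $A([-m,m])$ and its complement with signal/noise comparison of $|\phi_X|$ against multiples of $n^{-1/2}$. The only organisational difference is that the paper first applies the triangle inequality $\|\hat f_n-f\|^2\le 2\|f_{A,m}-f\|^2+2\|\hat f_n-f_{A,m}\|^2$ and then Parseval (so that on $[-n,n]^d\setminus A([-m,m])$ the integrand is $\E|\tilde\phi_{X,n}|^2$, subsequently bounded via $|\tilde\phi|^2\le 2|\phi|^2+2|\tilde\phi-\phi|^2$), whereas you apply Plancherel directly to $\|\hat f_n-f\|^2$ and package the argument into two reusable pointwise estimates (your ``variance-type'' and ``bias-type'' bounds, which correspond to the paper's Lemma~4.2 and inequalities (4.3)--(4.4)); this is a cosmetic rearrangement and the constants come out the same once the comparison levels are fixed as in the paper.
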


Note that a choice of $\kappa>2\sqrt{d+1}$ ensures that the last term $n^{d - \kappa^2/4}$ is negligible (see Remark \ref{rem:hu} and \ref{rem:Dn} below for modifications of the estimator $\hat{f}_n$ allowing a choice of $\kappa$ independent of $d$). Proof of Theorem \ref{thm:AD} is given in Section \ref{sec:prfiid}, it is self contained and relies on fine cuttings of the quadratic risk. The inequality involved is an Hoeffding inequality, which makes the proof robust to other contexts such as $\alpha$ or $\tau$ mixing sequences as shown in Section \ref{sec:dep}.  A discussion on the resulting rates of convergence is given in the next Section. 
The generalization to indirect measurements such as a deconvolution framework with known noise distribution could be easily derived (see  \cite{duval2019adaptive})  by replacing $\phi_{X}$ by $\phi_{Y}/\phi_{\eps}$ if one observes i.i.d. realizations of $Y = X+\eps$ where $X$ is independent of $\eps$. 

The spirit of estimator \eqref{eq:estfbis} is closely related to wavelet thresholded estimator where adap\-ti\-vi\-ty is achieved by thresholding empirical wavelet coefficients that are too small (namely smaller than a constant times $\sqrt{\log n/n}$) to suppress noise artefacts (see e.g. Kerkyacharian and Picard (2000) \cite{MR1821645} Sections 5 and 6 (in particular Theorem 6.1) for the multivariate isotropic density estimation (see also Donoho et al. (1995) \cite{donoho1995wavelet}) or Neumann (2000)  \cite{MR1769750} (in particular Theorem 2.3) for anisotropic estimation in a Gaussian white noise model). As for estimator \eqref{eq:estfbis}, thres\-hol\-ding enables to directly define an adaptive minimax estimator which requires no calibration of a parameter depending on the unknown smoothness of the function, at the cost of a logarithmic loss. If wavelets method have the advantage of facilitating the control of $\lk^{p}$ loss functions, the proof of Theorem \ref{thm:AD} relies on direct arguments allowing the generalization to mixing sequences as shown in Section \ref{sec:dep}.

\begin{remark}\label{rem:hu}
The remaining term $4n^{d-\kappa^{2}/4}$ in Theorem \ref{thm:AD} can be improved in $C_{\kappa,d}n^{-\kappa^{2}/4}$  by making the threshold in \eqref{eq:phiYcut}  depend on $u$. For a function $h:\R^{d}\mapsto \R_{+}$, let $\check{\phi}_{X,n}(u)=\hat{\phi}_{X,n}(u) \mathbf{1}${\scriptsize ${\{|\hat{\phi}_{X,n}(u)|\geq {\kappa}\sqrt{\log(h(u)n)/n} \}}$}. It is then straightforward to replace the bound in Lemma \ref{lem:Hoeffding} below by $4 (nh(u))^{-\kappa^{2}/{4}}$. Depending on the choice of $h$ the remaining term of Theorem \ref{thm:AD} changes (see Inequality \eqref{eq:VtoBH}). For $h\equiv 1$  we recover the remaining term  $4n^{d-\kappa^{2}/4}$;  choosing $h(u)=(1+|u_{1}|)\ldots(1+| u_{d}|)$ enables to replace this term by $C_{\kappa,d}n^{-\kappa^{2}/4}$ which is negligible for $\kappa>2$. \end{remark}

\begin{remark}\label{rem:Dn}
If one does not look for the directional  adaptivity given by the infimum over the class ${\mathcal A}$,  there is another way to get rid of the quantity $n^d$ appearing in the remaining term of Theorem \ref{thm:AD}.  It suffices  to modify the estimator $\hat{f}_n$ as follows : let 
$$
D_n=[-n, n]^d \cap \{ u \in {\mathbb R}^d : |u_1 \cdots u_d | \leq n \}\, 
\quad 
\text{and }
\quad
\text{\v{f} }(x)  = \frac{1}{(2\pi)^d} \int_{D_n}e^{- i\langle u,x\rangle} \tilde{\phi}_{X,n}(u) {\rm d}u \, .
$$
Following exactly the proof of Theorem \ref{thm:AD} and letting $f_m=f_{Id,m}$, one gets the oracle inequality:
\begin{multline*}
\E[  \| \text{\v{f}}- f\|^2 ]  \leq  \underset{m\in[0,n]^{d}, m_1\cdots m_d \leq n}{\inf}\left( 18
\|f-f_{m}\|^{2}
+\frac{10+2(1+(\kappa+2) \sqrt{\log n})^{2}}{\pi^d n}m_{1}\cdots m_{d}  \right)\\
+  C_{d} n^{1-\kappa^{2}/4}(\log n)^{d-1},    
\end{multline*}
where $C_{d}$ is a constant depending only on $d$. Here, the remaining term in the oracle inequality comes from the estimate
\begin{align}
\label{eq:VolDn}\int_{D_{n}}{\rm d}u = \int_{[-n,n]^{d}}\mathbf{1}_{|u_{1}\cdots u_{d}|\le n}{\rm d}u_{1}\ldots{\rm d}u_{d}\underset{n\to\infty}{\sim }  \frac{2^{d}(d-1)^{d-1}}{(d-1)!}\log(n)^{d-1}n.
\end{align}
 Proof of Equation \eqref{eq:VolDn} is given in  Section \ref{sec:prf}.
\end{remark}

\subsection{Rates of convergence on general anisotropic Sobolev classes}
To derive the resulting rate of convergence we require some regularity on the density $f$ to control the order of the bias term.  Theorem \ref{thm:AD} implies that our adaptive estimator $\hat{f}_n$ is rate optimal on Sobolev regularity classes, with a regularity direction given by any matrix $A \in {\mathcal A}$, up to a logarithmic loss. 
Indeed, consider a Sobolev class $\mathcal{S}(A,\mathbf s,L)$ for $\mathbf s=(s_{1},\ldots,s_{d})$, $L>0$  and $A \in {\mathcal A}$, defined as follows
\begin{align}
\label{eq:Sob} \mathcal{S}(A,\mathbf s,L)=\left\{f\in \lk^{1}(\R^{d}),\ \sum_{k=1}^{d}\int_{\R^{d}}|\mathcal F f(Au)|^{2}(1+|u_{k}|)^{2s_{k}}{\rm d} u \leq L \right\}.
\end{align}
For $f\in \mathcal{S}(A,\mathbf s,L), $ the bias term can be controlled by $$
\| f-f_{A,m}\|^{2}\leq  L|\mathrm{det}(A)| ( m_{1}^{-2s_{1}}+\cdots+ m_{d}^{-2s_{d}} ) \, .$$ 
Minimizing in $(m_{1},\ldots,m_{d})$ the  bound of Theorem \ref{thm:AD} we find for  
\begin{equation} \label{def:s}
\frac1{\overline s}=\sum_{k=1}^{d}\frac{1}{s_{k}}
\end{equation}
that the optimal cutoff is such that $m_{k}^{*}\asymp \left(\frac n{\log n}\right)^{\frac{2\overline s}{2s_{k}(2\overline {\mathbf s}+1)}}$ leading to a rate  $n^{-\frac{2\overline{\mathbf  s}}{2\overline{\mathbf  s}+1}}(\log n)^{\frac{2\overline{\mathbf  s}}{2\overline{\mathbf  s}+1}}.$
In case $A = Id$, this rate  is optimal, up to a power of $\log n$ (see Hasminskii and  Ibragimov (1990) \cite{10.1214/aos/1176347736}). Other regularity classes such as super smooth classes of densities can also be considered to control the bias term and different minimax optimal rates of convergence emerge (see Comte and Lacour (2013) \cite{comte2013anisotropic}).

 Note that $ \mathcal F f(Au)$ is the characteristic function of the random variable $Y = \, ^{t}AX$, if $X$ has Lebesgue density $f$. Introducing $A$  allows to define a regularity class dependent free of the specific frame in which the observations $X_{j}$ are displayed, since in Theorem \ref{thm:AD} we take the infimum over the set $\mathcal A$. Example \ref{ex:Ref} below shows that it is  a
relevant transformation that can improve the rates, since $A$ and  $\mathbf s$ are connected, namely $\mathbf s=\mathbf s(A)$. Taking the infimum over the set $\mathcal A$ permits to attain the rate induced by the best orientation of the axes, that is such that  $\overline{\mathbf s}=\overline{\mathbf s}(A)$ is maximal (see \eqref{def:s}).

The introduction of a linear transformation of the data is also evoked in Corollary 1 of Varet {\it et al.} (2019) \cite{varet2019numerical}: it is underlined that considering such transformations can improve the rates.
 However,  the matrix transformations considered therein are orthonormal and their procedure numerically investigates all possible transformations matrix whereas thresholding appears to intrinsically adapt to the optimal transformation.  
 
\begin{example}\label{ex:Ref}
Let $d=2$, $b>1$ and $0<a<b(1-b)$. Define $X=(bX_{1}, aX_{1}+bX_{2})$ where  $X_{1}$ and $X_{2}$ are independent, $X_{1}\sim\Gamma(\alpha+\frac12,1)$ and $X_{2}\sim\Gamma(\beta+\frac12,1)$, where $0<\beta<\alpha$. Denote by $f$ the density of $X$.  It is straightforward to check that 
$$
|\mathcal Ff(u)|^{2}=\frac1{(1+|bu_{1}+au_{2}|^{2})^{\alpha+\frac12}}\frac1{(1+|bu_{2}|^{2})^{\beta+\frac12}}. 
$$ 
Denote the associated bias  term, for $m_{1}\ge1$ and $m_{2}\ge1$, by
$B_{f}(m_1,m_2) = \iint_{[-m, m]^c} |\mathcal Ff(u)|^{2} {\rm d}u $. Consider the matrix 
$$A_{a,b}=\frac{1}{b^{2}}\begin{pmatrix}
b & -a\\ 0 & b
\end{pmatrix}\in \mathcal{A} \quad \text{for which} \quad 
|\mathcal Ff(A_{a,b}u)|^{2}=\frac1{(1+|u_{1}|^{2})^{\alpha+\frac12}}\frac1{(1+|u_{2}|^{2})^{\beta+\frac12}}$$ 
and the  associated bias term defined by 
$B_{f_{A_{a,b}}}(m_1,m_2) = \iint_{[-m, m]^c} |\mathcal Ff(A_{a,b}u)|^{2} {\rm d}u$.
We prove in Section \ref{sec:prf} the following result  for $0<\beta<\alpha$ and $m_{1}\ge1\vee C^{1}_{\alpha,\beta,b},\ m_{2}\ge1\vee C^{2}_{\alpha,\beta,b} $\begin{align}\label{eq:Bf2}
B_{f}(m_1,m_2)\ge\frac{c_{\alpha}}{2b\beta(1+b)^{2\beta}}\left(\frac{1}{m_2^{2\beta}}+ \frac{1}{2m_1^{2\beta}}\right)\ge  \frac{c_{\beta}}{2\alpha}\frac1{m_{1}^{2\alpha}}+ \frac{c_{\alpha}}{2\beta}\frac1{m_{2}^{2\beta}} \ge B_{f_{A_{a,b}}}(m_1,m_2),
\end{align} for some positive constants $C^{j}_{\alpha,\beta,b}$ and where  $c_{\gamma}=\int_{\R}{(1+z^{2})^{-(\gamma+\frac12)}} {\rm d}z $, $\gamma=\alpha,\beta$. 
Hence, if $0<\beta<\alpha$, the rate obtained by minimizing in $(m_1,m_2)$ the quantity
$$
18
\|f-f_{A,m}\|^{2}
+\frac{\left (10+2(1+(\kappa+2) \sqrt{\log n})^{2}\right )|\mathrm{det}(A)|}{\pi^d n}  m_{1} m_{2} 
$$
will be strictly better for $A=A_{a,b}$ than for $A=Id$ : for $A=A_{a,b}$, we obtain a rate of order $n^{-\frac{2\overline{\mathbf  s}}{2\overline{\mathbf  s}+1}}(\log n)^{\frac{2\overline{\mathbf  s}}{2\overline{\mathbf  s}+1}}$ with $\overline{\mathbf  s}=\alpha \beta/(\alpha +\beta)$, while for $A=Id$, we obtain a rate of order $n^{-\frac{\beta}{\beta+1}}(\log n)^{\frac{\beta}{\beta+1}}$.
\end{example}

\subsection{Empirical procedure to select $\kappa$\label{sec:kappa}}
Most adaptive methods require the calibration of a constant, here this constant is $\kappa$.
This is usually done by preliminary simulation experiments. Calibration strategies (dimension jump and slope heuristics) exist  for penalized methods, see Baudry \textit{et al.} (2012) \cite{baudry2012slope} and Lerasle (2012) \cite{lerasle2012optimal} for the\-o\-re\-ti\-cal justifications. 

Here, we propose a strategy to select $\kappa$ adapted to our context and we implement it in the numerical Sections \ref{sec:simiid} and \ref{sec:simdep}. This procedure is the result of two observations. Firstly, the set $\{|\hat{\phi}_{X,n}|\geq {\kappa}_n n^{-1/2}\}$ (recall that $\kappa_{n}= (1 +\kappa \sqrt{\log n}  )$) appearing in \eqref{eq:estfbis} is the excursion set of the process $Z = |\hat{\phi}_{X,n}|$ above the level ${\kappa}_n n^{-1/2}$. For these sets there exists an important literature on their geometry which provide an  information that we exploit here (see Adler and Taylor (2009) \cite{adler2009random}). One of these geometrical measure is the Euler characteristic;  in dimension $d=1$ it is a count of the number of connected components and if $d=2$ it is a count of the number of connected components minus the number of holes. 
Secondly, the constant $\kappa$ determines how much information from $\hat{\phi}_{X,n}$ is dropped. If $\kappa$ is selected too large relevant information on $f$ is lost and if $\kappa$ is selected too small  artefact noise may jeopardize its estimation. 

 We expect that the  function $\kappa\mapsto \chi(\{|\hat{\phi}_{X,n}|\geq {\kappa}_n n^{-1/2}\})$, where $\chi$ denotes the Euler characteristic, will stabilize once all uninformative areas are thresholded. Then, the first $\kappa$ where $\chi$ is constant will perform a good compromise between keeping enough information and removing most of the noise. 
This motivates the following adaptive procedure to select $\kappa$. Set $\delta >0$  and define $\chi_{x}:=\chi(\{|\hat{\phi}_{X,n}|\geq (1+x\delta\sqrt{\log n}) n^{-1/2}\})$ \begin{align}
\label{eq:hatkappa} \hat \kappa_{n}:= \inf\left\{\kappa \in \{k\delta, k\in \N\}, \chi_{\kappa} = \chi_{\kappa-1}\right\}.
\end{align} 
Note that this quantity does not depend on any additional calibration constant, except for the mesh-grid $\delta$  that should be taken  small. 

In the case $d=2$  we can visualize the set  $\{|\hat{\phi}_{X,n}|\geq {\kappa}_n n^{-1/2}\}$ as a black and white image (see Figure \ref{Fig:EC}). On this example $X$ is a Gamma-Beta random variable (see [GB] example in the next Section), we observe that if $\kappa$ is two small there are many uninformative areas that are taken into account in the estimator (left image) and that the Euler characteristic indeed gets constant for large enough $\kappa$ (right image).

\begin{figure}[H]\begin{center}
\begin{tabular}{ccc}
\includegraphics[scale = 0.47]{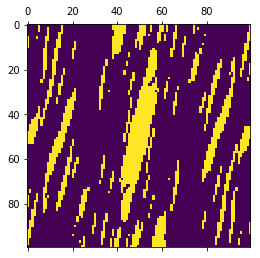}
&\includegraphics[scale = 0.47]{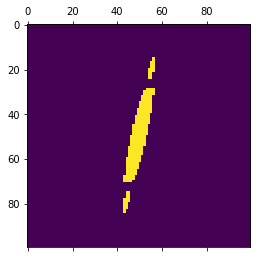}&\includegraphics[scale = 0.47]{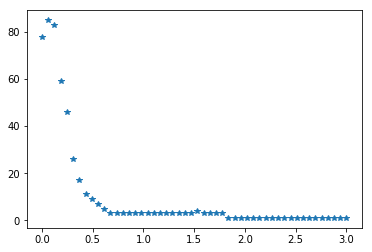}
\end{tabular}
\caption{\label{Fig:EC}\footnotesize{Representation of the set $\{|\hat{\phi}_{X,n}|\geq {\kappa}_n n^{-1/2}\}$ on a grid of $\R^{2}$ for the GB example 
for $\Sigma^{2}$
 defined in Section \ref{sec:simiid},
$n=10^{4}$, $\kappa = 0.1$ (left) and $\kappa = 1$ (center). Yellow pixels corresponds to areas where $\{|\hat{\phi}_{X,n}|\geq {\kappa}_n n^{-1/2}\}$. The last image (right) represents $\kappa\mapsto \chi(\{|\hat{\phi}_{X,n}|\geq {\kappa}_n n^{-1/2}\})$.}}
\end{center}
\end{figure}

\subsection{Numerical study in the independent case\label{sec:simiid}}

\paragraph{Examples considered}

We illustrate the performances of the estimator \eqref{eq:estfbis} with the adaptive choice of $\kappa$ described in \eqref{eq:hatkappa} in dimension $d=2$.
We consider three examples:\\
\indent [N] Gaussian: $X\sim \mathcal{N}\left(\begin{pmatrix}
0\\0
\end{pmatrix}, \Sigma^{2}\right)$, with $\Sigma^{2}=\begin{pmatrix}
1 & 0.5\\ 0.5 &3
\end{pmatrix}$,\\
\indent [Mix-NN] Mixture of Gaussian: $X\sim 0.4\,\mathcal{N}\left(\begin{pmatrix}
-2\\-2
\end{pmatrix},\begin{pmatrix}
1 & 0.2\\ 0.2 &3
\end{pmatrix}\right)+0.6\,\mathcal{N}\left(\begin{pmatrix}
2\\2
\end{pmatrix}, \begin{pmatrix}
1 & 0.3\\ 0.3 &1
\end{pmatrix}\right)$,\\
\indent [GB] Gamma-Beta : $X\sim W Y$, where $W= \begin{pmatrix}
1 & 0.1\\ 0.2 &1
\end{pmatrix}$ and $Y=\begin{pmatrix}
Y_{1}\\ Y_{2}
\end{pmatrix}$ where $Y_{1}\sim \Gamma (5,1)$ is independent of $Y_{2}\sim\mathcal{B}\mbox{eta}(2,2)$.\\
To evaluate the performances of the procedure we compute normalized $\lk^{2}$-risks   to allow the numerical comparaison of the different examples for which $\| f\|^{2}$ may vary. Namely, we evaluate empirically
\[\frac{\E[\|\hat{f}_n-f\|^{2}]}{\|f\|^{2}}\]
 and  the associated deviations, from $N=100$ independent datasets with different values of sample size $n=10^{3},\ 10^{4}$ and $10^{5}$. 

\paragraph{Results and comments}
They are displayed in Table \ref{Table:Risk} and confirm the theoretical results. As expected the estimated risks decrease as $n$ increases. Since we compute the normalized $\lk^{2}$-risks we can compare them for the different examples. As anticipated  the GB case is the most difficult to estimate and has the largest risk, indeed the Beta distribution is compactly supported which makes it difficult for a Fourier estimator to recover. Interestingly, the choice of $\kappa$ by \eqref{eq:hatkappa} leads in each example to a constant close to 1, except for the Gaussian case where it is a little  smaller. Its dependency in $n$ can also be observed: it has a tendency to decrease with $n$.

\begin{figure}[H]\begin{center}
\begin{tabular}{ccc}
\hspace{-0.8cm}\includegraphics[scale = 0.44]{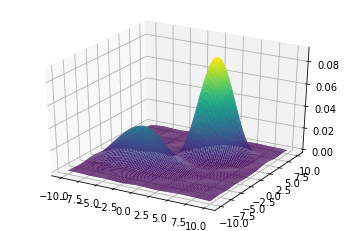}
&\includegraphics[scale = 0.44]{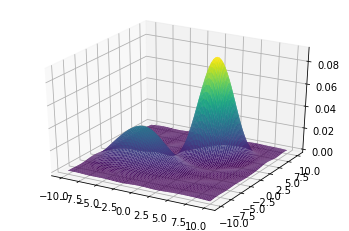}&\includegraphics[scale = 0.44]{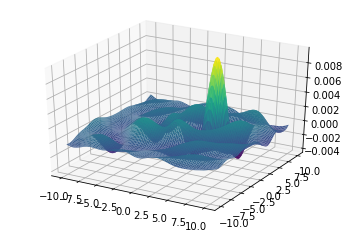}
\end{tabular}
\caption{\label{Fig:NN}\footnotesize{Representation for
$n=10^{4}$ of the estimated Mix-NN (left), the true density (center) and the difference of both estimated and true density (right).}}
\end{center}
\end{figure}

\begin{table}[H]\begin{center}
\begin{tabular}{c|c|c|c||c|c|c||c|c|c}
Case& \multicolumn{3}{c||}{N} & \multicolumn{3}{c||}{Mix-NN} &  \multicolumn{3}{c}{GB}\\
\hline
$n$ & $10^{3}$  & $10^{4}$  & $10^{5}$  & $10^{3}$  & $10^{4}$  & $10^{5}$  & $10^{3}$  & $10^{4}$  & $10^{5}$ \\
\hline
{100$\times$Risk } & $1.04$ & $0.12$  & $1.32$ {\scriptsize{$10^{-2}$}}  &
$3.02$  &$0.39$ &$4.70$ {\scriptsize{$10^{-2}$}} &$5.96$ &$1.65$  &$0.27$ \\
{100$\times\sqrt{\V}$(Risk) } &  {\footnotesize{$(0.56$)}} & {\footnotesize{$(0.05)$}} & {\footnotesize{$(0.58\times 10^{-2})$}} &  {\footnotesize{$(0.59)$}} &
 {\footnotesize{$(0.07)$}}  & {\footnotesize{$(1.05\times 10^{-2})$}} &  {\footnotesize{$(1.47)$}} &  {\footnotesize{$(0.24)$}} &  {\footnotesize{$(0.08)$}}\\
 \hline
\multirow{2}{*}{$\hat \kappa_{n}$} & 0.88 & 0.81 & 0.79& 1.06& 1.01& 0.94& 1.09&1.01& 1.00 \\
& {\footnotesize{$(0.19)$}} & {\footnotesize{$(0.15)$}} & {\footnotesize{$(0.13)$}} & {\footnotesize{$(0.09)$}} & {\footnotesize{$(0.09)$}}& {\footnotesize{$(0.09)$}}& {\footnotesize{$(0.08)$}} & {\footnotesize{$(0.09)$}}& {\footnotesize{$(0.15)$}} 
\end{tabular}
\caption{\label{Table:Risk}\footnotesize{Evaluated empirical risks multiplied by 100 and adaptive $\hat\kappa_{n}$, the associated squared variances are given in parenthesis.}}\end{center}
\end{table}

     \section{Directional adaptive procedure: mixing sequences\label{sec:dep}}
   In this section, we consider   $X_1,...,X_n$  a stationary sequence of ${\mathbb R}^d$-valued dependent random variables with density $f$ with respect to the Lebesgue measure. As in the independent case, we estimate $f$ on ${\mathbb R}^d$ with the estimator \eqref{eq:estfbis} and evaluate its performances using the quadratic loss on ${\mathbb R}^d$. We consider the cases of $\alpha$-mixing and $\tau$-mixing sequences,  for which we give analogues of Theorem \ref{thm:AD}. 

   \subsection{$\alpha$-mixing sequences}

To measure the  dependence between the variables $X_i$, we consider the usual $\alpha$-mixing coefficients introduced by  Rosenblatt (1956)   \cite{rosenblatt1956central}. Let us recall the definition of these coefficients.

\begin{definition} \label{def alpha}
	Let $(\Omega,\mathcal{A},\mathbb{P})$ be a probability space,  and $(X_i)_{i\in\mathbb{Z}}$ be a sequence of ${\mathbb R}^d$-valued random variables. Let $\mathcal{F}_k = \sigma (X_i : i \leq k)$ and $\mathcal{G}_k = \sigma (X_i : i \geq k)$. 
	The strong mixing coefficients $(\alpha(n))_{n \geq 0}$  of $(X_i)_{i \in \mathbb{Z}}$ (Rosenblatt \cite{rosenblatt1956central}), are defined by 
	\[   \alpha(0) = \frac12 \text{\; and \;} \alpha(n) = \sup_{k \in \mathbb{Z}}  \;\alpha( \mathcal{F}_k,\mathcal{G}_{k+n} ) \ \ \text{ for any } n> 0 ,\]
	where, for two $\sigma$-algebra ${\mathcal F}, {\mathcal G}$, 
	$$
	\alpha({\mathcal F}, {\mathcal G})=2 \sup_{A \in {\mathcal F}, B \in {\mathcal G}} \left | {\mathbb P}(A\cap B)- {\mathbb P}(A) {\mathbb P}(B)\right | \, .
	$$
	Note that we use here the convention of the book by Rio \cite{rio2017asymptotic}, so that the definition of $\alpha({\mathcal F}, {\mathcal G})$ differs from that of Rosenblatt \cite{rosenblatt1956central} from a factor 2.
\end{definition}

For $\alpha$-mixing sequences, we  prove the following oracle inequality :

\begin{theorem}\label{thm:ADalpha}
Let $ \kappa > 0$. Assume that  $\sum_{k=1}^{\infty} \alpha(k) < \infty $, 
and let $C_{\alpha}= 1 + 4 \sum_{k=1}^{\infty} \alpha(k)$.  
There exist constants $K_i, i \in \{ 1, ... , 7\}$, depending on $C_\alpha $ 
such that 
\begin{multline*}
\E[  \| \hat{f}_n- f\|^2 ]  \\
\leq  \underset{m\in[0,n]^{d}, A \in {\mathcal A}}{\inf}\left( K_1
\|f-f_{A, m}\|^{2} 
+\frac{K_2+\left (2(1+(\kappa+\sqrt {K_3})\sqrt{\log n})^{2} \right ) |\mathrm{det}(A)|}{\pi^d n}m_{1}\cdots m_{d}  \right)
\\  +\frac{2^7}{\pi^d} n^{d-\frac{\kappa^2}{K_3}} + f_\alpha(n,d,\kappa) \, ,
\end{multline*}
where the residual term $f_\alpha(n,d,\kappa)$ is given by 
$$
f_\alpha(n,d,\kappa)=\frac{K_4n^{(2d+1)/2}}{ \sqrt{\log n}}
 \alpha\left(\left[\frac{\sqrt{n} K_5 }{\sqrt{\log n}}\right ]\right)+\frac{K_6n^{(2d+1)/2}}{\kappa \sqrt{\log n}}
 \alpha\left(\left[\frac{\sqrt{n} K_7}{ \kappa \sqrt{\log n}}\right ]\right) \, .
$$
\end{theorem}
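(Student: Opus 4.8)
The plan is to follow verbatim the architecture of the proof of Theorem~\ref{thm:AD}, changing only the concentration step. By Plancherel's identity and the fact that $\mathcal F\hat f_n=\tilde\phi_{X,n}\mathbf 1_{[-n,n]^d}$, the quadratic risk splits as
\[
(2\pi)^d\,\E\|\hat f_n-f\|^2=\E\!\int_{[-n,n]^d}\!|\tilde\phi_{X,n}(u)-\phi_X(u)|^2\dd u+\int_{([-n,n]^d)^c}\!|\phi_X(u)|^2\dd u.
\]
First I would reproduce the fine cutting of the frequency domain from the independent case: partition $[-n,n]^d$ according to whether $u$ lies in the oracle band $A([-m,m])$, whether the threshold event $\{|\hat\phi_{X,n}(u)|\ge\kappa_n n^{-1/2}\}$ holds, and whether $|\phi_X(u)|$ is large or small relative to the threshold. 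On the pieces where the threshold decision is \emph{consistent} with the true size of $\phi_X$, the contribution is deterministic and yields, exactly as in Theorem~\ref{thm:AD}, the bias term $K_1\|f-f_{A,m}\|^2$ together with the leading variance contribution.

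The first genuine difference is in the variance term. Here the summands $e^{i\langle u,X_j\rangle}$ are no longer independent, so instead of $\V(\hat\phi_{X,n}(u))=O(1/n)$ I would expand into covariances and bound each $\Cov(e^{i\langle u,X_0\rangle},e^{i\langle u,X_k\rangle})$ by $2\alpha(k)$ via Rio's covariance inequality for bounded variables. This gives the uniform-in-$u$ estimate $\V(\hat\phi_{X,n}(u))\le C_\alpha/n$ with $C_\alpha=1+4\sum_{k\ge1}\alpha(k)$, and it is precisely through this estimate that the constants $K_1,K_2$ acquire their dependence on $C_\alpha$.

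The core of the argument is the deviation step, where the Hoeffding bound of Lemma~\ref{lem:Hoeffding} must be replaced by the Fuk--Nagaev inequality of Lemma~\ref{lem:FG}. On the cutting pieces where the threshold decision conflicts with the size of $\phi_X$, I would control $\PP(|\hat\phi_{X,n}(u)-\phi_X(u)|\ge t)$ for $t$ of the order of the threshold $\kappa_n n^{-1/2}$, i.e.\ for a deviation $\lambda=nt$ of the partial sum. Applying Lemma~\ref{lem:FG} with an appropriately chosen block length $q$, the exponential part produces a pointwise bound of order $n^{-\kappa^2/K_3}$; integrating it over the cube of volume $\sim(2n)^d$ yields the term $\tfrac{2^7}{\pi^d}n^{d-\kappa^2/K_3}$, the analogue of $\tfrac{64}{\pi^d}n^{d-\kappa^2/4}$. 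The polynomial (mixing) part of Lemma~\ref{lem:FG} carries a prefactor of order $n/\lambda$ and a factor $\alpha(q)$; since $\lambda\sim\kappa\sqrt{n\log n}$, this prefactor is of order $\sqrt n/(\kappa\sqrt{\log n})$, and multiplying by the volume $\sim n^d$ produces the $n^{(2d+1)/2}$ scaling of the residual $f_\alpha(n,d,\kappa)$. The two summands in $f_\alpha$ reflect the two scales $n^{-1/2}$ and $\kappa\sqrt{\log n}\,n^{-1/2}$ present in the threshold $\kappa_n n^{-1/2}=(1+\kappa\sqrt{\log n})n^{-1/2}$, which is why one summand is $\kappa$-free while the other carries $\kappa$ in both prefactor and argument.

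The hard part will be the bookkeeping in this last step. One must tune the block length $q$ so that the exponential rate lands exactly on $\kappa^2/K_3$ while the mixing remainder, once multiplied by the $n/\lambda$ prefactor and integrated over the large cube $[-n,n]^d$, collapses precisely to the stated $f_\alpha$ with arguments $[\sqrt n\,K_5/\sqrt{\log n}]$ and $[\sqrt n\,K_7/(\kappa\sqrt{\log n})]$. Tracking the numerical constants $K_3,\dots,K_7$ through the optimization over $q$, and checking that the variance estimate $C_\alpha/n$ is genuinely uniform in $u$ so that Lemma~\ref{lem:FG} applies with a single set of constants, is where all the effort lies; the remaining algebra of the fine cutting is identical to the independent case treated in Section~\ref{sec:prfiid}.
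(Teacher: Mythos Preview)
Your plan is correct and follows the paper's proof essentially verbatim: replace the Hoeffding bound by the Fuk--Nagaev inequality of Lemma~\ref{lem:FG} (via Lemma~\ref{lem:FG2}), replace the i.i.d.\ variance bound $1/n$ by the covariance bound $C_\alpha/n$ of Lemma~\ref{lem:Ibra}, and redo the cutting of Section~\ref{sec:prfiid} with Lemma~\ref{lem:var2} in place of Lemma~\ref{lem:var}. Two small clarifications will save you time on the bookkeeping. First, there is no free block length $q$ to tune: in the paper's version of Fuk--Nagaev one takes $s_n^2=nC_{\alpha,n}/4$ and $\lambda=b\sqrt{n\log n}/(8\sqrt 2)$, and the argument of $\alpha(\cdot)$ is then forced to be $[s_n^2/\lambda]\asymp\sqrt{n}/(b\sqrt{\log n})$. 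Second, the two summands of $f_\alpha$ do not come from the two pieces of $\kappa_n=1+\kappa\sqrt{\log n}$; they come from the two \emph{places} where Fuk--Nagaev is invoked. The $\kappa$-free summand arises in the variance control on $A([-m,m])$ (Lemma~\ref{lem:var2}), where one applies Lemma~\ref{lem:FG2} with $b=\sqrt{K_3}=\sqrt{93\,C_{\alpha,n}}$ chosen so that the sub-Gaussian part is exactly $8/n$; the $\kappa$-dependent summand arises in the analogue of \eqref{eq:VtoBH}, where the triangle inequality forces $b=\kappa$.
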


Explicit upper bounds for the constants $K_i, i \in \{ 1, ... , 7\}$, can be computed from the proof of Theorem \ref{thm:ADalpha}. 

\begin{remark}\label{rem:Dnbis} The term $n^{(2d+1)/2}$ in the expression of $f_\alpha(n,d,\kappa)$ implies that  the polynomial rate of mixing must depend on $d$ if this residual term is to be of order $O(1/n)$. More precisely, we need a rate of order $\alpha(n)=O(n^{-a})$, for $a>3+2d$. 
As in Remark \ref{rem:Dn}, if one does not look for the directional  adaptivity given by the infimum over the class ${\mathcal A}$, then the term  $n^{(2d+1)/2}$ can be considerably weakened by considering the estimator $\text{\v{f}}$  of Remark \ref{rem:Dn}. Doing so, the term $n^{(2d+1)/2}$ in the expression of $f_\alpha(n,d,\kappa)$ is replaced by $C_d n (\log n)^{d-1}$, and the constraint on the rate of mixing becomes $a>5$ (not depending on $d$). 
\end{remark}

According to Remark \ref{rem:Dnbis}, if $\alpha(n)=O(n^{-a})$ for $a>3+2d$ and for $\kappa$ large enough ($\kappa>\sqrt{K_3(d+1)}$),  the upper bound of Theorem \ref{thm:ADalpha} is the same, except for the constants, as in the independent case (Theorem \ref{thm:AD}). Then,  the resulting rates of convergence on
the Sobolev class $\mathcal S(A,\mathbf s,L)$ defined by \eqref{eq:Sob} are minimax optimal up to a logarithmic loss. Note that the latter choice for $\kappa$ depends on the unknown constant $C_{\alpha}$ that involves the mixing coefficients. In practice, the choice of $\kappa$ selected by the adaptive procedure described in Section \ref{sec:kappa} leads to very good numerical results and does not rely on any knowledge on $C_{\alpha}$.

Note that the residual term $f_\alpha(n,d,\kappa)$ tends to zero as soon as $\alpha(n)=O(n^{-a})$ for $a>2d +1$, and is of order 
$$
f_\alpha(n,d,\kappa)=O\left (  \frac{ (\log n)^{(a-1)/2}}{n^{(a-(2d+1))/2}}\right ) \, .
$$
Hence, our adaptive estimator will still be consistent as soon as $a>2d +1$, with possibly a suboptimal rate of convergence. In the simulations, for $d=1$, we shall look at the boundary case $a=3$. 

\smallskip 

To conclude this section, let us take a look at some  articles that consider the problem of adaptive density estimation in a dependent context. We are going to treat separately the articles which deal with $\beta$-mixing sequences, from those which deal with $\alpha$-mixing sequences, because the case of $\beta$-mixing is both simpler from a technical point of view, and more restrictive in terms of examples. Note that in some articles (such as \cite{lerasle2009adaptive,lerasle2011optimal}, \cite{bertin2017pointwise}) other notions of dependencies are also considered.

\smallskip

As mentioned in the Introduction,  the  only article that we know of that deals with adaptive estimation in a mixing context and in ${\mathbb R}^d$, is that of Bertin {\it et al.} \cite{BKLP20}. Note that the context and method are quite different from ours (kernel estimation on a bounded domain), and that these authors do not consider the anisotropic case and assume that the $\beta$-mixing coefficients decrease at an exponential rate.

All the articles we are going to discuss now only consider the case $d=1$.
 In the $\beta$-mixing framework, 
Tribouley and Viennet (1998) \cite{tribouley1998lp} proposed a wavelet method  and 
Comte and Merlev\`ede (2002) \cite{comte2002adaptive}  a general model selection procedure (valid for a large class of models) to estimate the density on a compact support. Both adaptive  estimators reach the minimax rates of convergence over Besov classes under the condition $\beta(k)=O(k^{-a})$ for  $a>4$. 
Comte et al (2008) \cite{CDT2008} proposed a model selection procedure for projection estimators on the Shannon basis (i.e. the orthonormal basis generated by the sinus-cardinal function), to estimate the density on the whole real line. The results are valid under the condition $\beta(k)=O(k^{-a})$ for  $a>4$. In  the same  context, Lerasle  \cite{lerasle2009adaptive,lerasle2011optimal} gave a general model selection result (valid for a large class of models) under the condition $\beta(k)=O(k^{-a})$ for  $a>3$. Note that in  \cite{lerasle2011optimal}, the delicate question of the ``data-driven" penalty is discussed.
Asin and Johannes (2018) \cite{asin2017adaptive} proposed a model selection procedure to estimate the density, when the regularity of $f$  is given by the decrease of the coefficients of the decomposition of $f$ on a fixed orthonormal basis. Here again, the results are valid if $\beta(k)=O(k^{-a})$ for some $a>4$ (see the comments on condition (4.6), after their Corollary 4.7).  In the $\alpha$-mixing framework, Comte and Merlev\`ede (2002) \cite{comte2002adaptive} also study the case where $\alpha(k)$ decreases geometrically (see their Theorem 3.2). Their adaptive estimator reaches the minimax rates of convergence over Besov classes, up to a power of a logarithmic term. In the case where $\alpha(k)=O(k^{-a})$ (see their Proposition 3.2), their results hold provided that $a>6$, and under an additional regularity assumption on the joint densities $g_{X_k, X_\ell}$, $k\neq \ell$.

All  previously cited articles give results for the integrated quadratic risk (integrated $\lk^{p}$-risk are also considered in  \cite{tribouley1998lp}), like the one we are considering here. Let us also mention the article by Bertin and Klutchnikoff (2017) \cite{bertin2017pointwise} which proposes an automatic bandwidth selection for a kernel estimator, and for the pointwise risk. Their result applies to a large class of dependent sequences,  in particular to $\alpha$-mixing sequences for which the mixing coefficient decreases geometrically.

\smallskip

We can see that, whether for  $\beta$ or $\alpha$ mixing sequences, the constraint $a>3$ on the mixing speed is always required for the adaptive estimation. This corresponds  to the minimum constraint  we found for $d=1$. Yet there is no heuristic explanation for this, since when the regularity is known, non adaptive estimators attain the minimax i.i.d.  rate as soon as $\sum \alpha(k) < \infty$. 

\smallskip

The article closest to ours in the case $d=1$ is that of Comte and Merlev\`ede (2002) \cite{comte2002adaptive}, although the context and the method of estimation are  different. The mixing speed we get to approach the minimax rate is a bit better than theirs ($a>5$ instead of $a>6$), but we still have a loss in a power of $\log n$. Moreover, we do not need any condition on the joint densities $g_{X_k, X_\ell}$.

\subsection{Geometrically $\tau$-mixing sequences} 
To measure the  dependence between the variables $X_i$, we consider the  $\tau$-mixing coefficients introduced by  Dedecker an Prieur    \cite{dedecker2005new}. Let us recall the definition of these coefficients.

\begin{definition} \label{def tau}
Let $(\Omega,\mathcal{A},\mathbb{P})$ be a probability space. Let $|\cdot|_2$ be the euclidean norm on ${\mathbb R}^d$. On $({\mathbb R}^d)^k$, consider the $\ell_1$ norm $|z|_{k,1}=|z_1|_2+ \cdots + |z_k|_2$.
Let  $Z$ be an $({\mathbb R}^d)^k$-valued random variable such that $\E(|Z|_{k,1})< \infty$. Let $\Lambda_1 (({\mathbb R}^d)^k)$ be the space of 1-Lipschitz function from $(({\mathbb R}^d)^k, |\cdot|_{k,1})$ to ${\mathbb R}$. 
The $\tau$-mixing coefficient between a $\sigma$-algebra ${\mathcal M}$  and the random variable $Z$ is defined by
\[
\tau ( {\mathcal M}, Z) = \left  \Vert  \sup \Big \{ \Big |  \int_{({\mathbb R}^d)^k} f(x) {\mathbb P}_{Z | { \mathcal M} } (dx) -  
\int_{({\mathbb R}^d)^k} f(x) {\mathbb P}_{Z} (dx) \Big | , f \in \Lambda_1 (({\mathbb R}^d)^k) \Big \}  \right \Vert_1 \, .
\]
Let now $(X_i)_{i\in\mathbb{Z}}$ be a sequence of ${\mathbb R}^d$-valued random variables, and let $\mathcal{F}_k = \sigma (X_i : i \leq k)$. Define then 
\begin{equation*} \label{deftaureverse}
  \tau (n) =\sup_{\ell \geq 1}  \frac 1 \ell \sup_{k+n\leq i_1 < \cdots < i_\ell} \tau( \sigma (X_i : i \leq k), (X_{i_1}, \ldots, X_{i_\ell})) \, .
\end{equation*}
\end{definition}

As stated in \cite{dedecker2005new}, the $\tau$-mixing  coefficient is a coupling coefficient for the Kantorovich distance. This property has two consequences : one can use the coupling properties to demonstrate very precise deviation inequalities for partial sums, and one can give a bound on  these coefficients for many classes of non-mixing processes in the sense of Rosenblatt (functions of i.i.d. sequences, non-irreducible Markov chains, dynamical systems... see again \cite{dedecker2005new}). As a recent exemple, let us mention that this coefficient can be computed for dynamical systems that can be modelled by Young Towers; for instance, if the return time to the base of the tower has an exponential moment, then the $\tau$-mixing coefficient decreases at an exponential rate (see \cite{CDM23}).

For $\tau$-mixing sequences whose coefficients decrease at an exponential rate, we consider the estimator $\hat{f}_n$ defined in \eqref{eq:estfbis} with a slight modification in the power of the logarithm in the threshold, namely we take $\kappa_n= 1+\kappa \log n$.

\begin{theorem}\label{thm:ADtau}
Assume that $\tau(n) \leq K a^n$ for some $K\geq 1$ and $a \in (0,1)$. There exist constants $K_1, K_2, K_3$ depending on $(K,a, d)$ and $K_4, b \in (a,1)$ depending on $(K,a, d, \kappa)$ such that (for $n\geq 2$)
\begin{multline*}
\E[  \| \hat{f}_n- f\|^2 ] \\
 \leq   \underset{m\in[0,n]^{d}, A \in {\mathcal A}}{\inf}\left( K_1 \log n
\|f-f_{A,m}\|^{2}
+\frac{\left (K_2 \log n +2(1+(\kappa+\sqrt{K_3})\log n)^{2}\right )|\mathrm{det}(A)|}{\pi^d n}m_{1}\cdots m_{d}  \right)
\\  +\frac{2^7}{\pi^d} n^{d-\frac{\kappa^2}{K_3}} + K_4 b^n \, .\end{multline*}
\end{theorem}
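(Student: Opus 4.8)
The plan is to follow the architecture of the proof of Theorem \ref{thm:AD} verbatim, replacing the single Hoeffding-type concentration inequality used there (Lemma \ref{lem:Hoeffding}) by the Fuk–Nagaev inequality available in the $\tau$-mixing case (Lemma \ref{lem:FGtau}, derived from \cite{DP04}), and then tracking how the geometric decay $\tau(n)\le K a^n$ feeds through the estimates. The starting point is the standard $\lk^2$ decomposition via Plancherel's identity: writing $\E[\|\hat f_n-f\|^2]=\tfrac{1}{(2\pi)^d}\E\big[\int_{[-n,n]^d}|\tilde\phi_{X,n}(u)-\Fourier f(u)|^2\,\dd u\big]$ (up to the usual tail term from truncating the Fourier domain at $[-n,n]^d$), I would split the integrand on the excursion set $\{|\hat\phi_{X,n}|\ge\kappa_n n^{-1/2}\}$ and its complement exactly as in the independent case. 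On each piece one compares the empirical characteristic function $\hat\phi_{X,n}(u)$ to its mean $\phi_X(u)$, and the whole game reduces to controlling, for each fixed $u$, the deviation probability $\PP(|\hat\phi_{X,n}(u)-\phi_X(u)|\ge t)$ and the associated truncated second moments.

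The key substitution is at the concentration step. Each coordinate of $\hat\phi_{X,n}(u)-\phi_X(u)$ is a normalized partial sum $\tfrac1n\sum_{j=1}^n\big(g(X_j)-\E g(X_j)\big)$ with $g$ equal to $\cos\langle u,\cdot\rangle$ or $\sin\langle u,\cdot\rangle$, hence bounded by $1$ and $1$-Lipschitz with Lipschitz constant $|u|_2$ (relative to the euclidean norm used in Definition \ref{def tau}). This is precisely the regularity required to invoke the $\tau$-mixing Fuk–Nagaev bound of Lemma \ref{lem:FGtau}. The point of the modified threshold $\kappa_n=1+\kappa\log n$ (an extra power of $\log n$ compared with the $\alpha$-mixing case) is that, under geometric mixing, the optimal truncation level inside the Fuk–Nagaev inequality produces a mixing contribution of the form $n\,\tau(c n/\log n)\lesssim n\,a^{cn/\log n}$; choosing the Lipschitz/truncation parameters so that this equals $b^n$ for some $b\in(a,1)$ is what yields the clean residual $K_4 b^n$ rather than the polynomial residual $f_\alpha(n,d,\kappa)$ seen in Theorem \ref{thm:ADalpha}. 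With the threshold at level $\log n$ instead of $\sqrt{\log n}$, the large-deviation part of the inequality still gives the term $n^{d-\kappa^2/K_3}$ after integrating the bound $\,(nh(u))^{-\kappa^2/K_3}$-type estimate over $u\in[-n,n]^d$, while the second-moment (variance) contribution reproduces the main term $\tfrac{\kappa_n^2|\det A|}{\pi^d n}m_1\cdots m_d$, now carrying the extra $\log n$ factors $K_1\log n$ and $K_2\log n$ that multiply the bias and variance — these arise because the whole analysis is run at threshold level $(\log n)$ rather than $(\sqrt{\log n})$, and because Lemma \ref{lem:FGtau} loses a logarithmic factor relative to the pure i.i.d. Hoeffding bound. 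The bias term $\|f-f_{A,m}\|^2$ and the directional-adaptivity infimum over $m\in[0,n]^d$ and $A\in\mathcal A$ are inherited unchanged from the decomposition, since they are purely deterministic approximation-theoretic quantities independent of the dependence structure.

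I expect the main obstacle to be the bookkeeping in the concentration step: specifically, balancing the three free parameters in the Fuk–Nagaev inequality (the truncation level, the choice inside the exponential/Markov part, and the mixing-rate argument) so that simultaneously (i) the exponential large-deviation term is summable and integrable over $[-n,n]^d$, contributing the $n^{d-\kappa^2/K_3}$ residual; (ii) the mixing term collapses to a genuinely geometric $b^n$ with $b\in(a,1)$ strictly, which forces the threshold to scale like $\log n$ rather than $\sqrt{\log n}$; and (iii) the variance term is not inflated by more than a single logarithmic factor. The delicate interplay is that pushing the mixing term down to exponential order requires taking the Fuk–Nagaev truncation at scale $\sim n/\log n$, and one must verify that this choice does not degrade the variance proxy beyond the advertised $K_2\log n$; this is where the hereditary subtleties of $\tau$-mixing (that functions of $\tau$-mixing sequences need not be $\tau$-mixing) are sidestepped, because $\cos$ and $\sin$ are Lipschitz and Lemma \ref{lem:FGtau} is stated directly for Lipschitz functionals, so no secondary mixing coefficients for $g(X_j)$ are needed. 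The remaining steps — the Fourier-truncation tail estimate giving the residual's polynomial part and the deterministic bias bound — are routine and identical to the independent-case proof.
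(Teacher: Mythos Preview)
Your overall architecture is correct and matches the paper's proof: one follows the decomposition of Theorem~\ref{thm:AD} line by line, replacing Lemma~\ref{lem:Hoeffding} by the $\tau$-mixing Fuk--Nagaev inequality (Lemma~\ref{lem:FGtau}) applied to $\cos\langle u,X_j\rangle$ and $\sin\langle u,X_j\rangle$, and replacing the i.i.d.\ variance bound by its $\tau$-mixing analogue (Lemma~\ref{lem:Ibratau} in the paper). You are also right that the Lipschitz property of these functions with constant $|u|_2$ is precisely what makes the $\tau$-mixing machinery applicable without any hereditary issues.

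Two points in your bookkeeping need correction, however. First, the factors $K_1\log n$ on the bias and $K_2\log n$ in the variance do not arise ``because the analysis is run at threshold level $\log n$''; they come from the fact that the Lipschitz constant of $x\mapsto e^{i\langle u,x\rangle}$ is $|u|_2\le\sqrt d\,n$ for $u\in[-n,n]^d$, so the relevant covariance bound is $|\Cov(e^{i\langle u,X_1\rangle},e^{i\langle u,X_k\rangle})|\le\min(2\sqrt d\,n\,\tau(k-1),1)$, and summing over $k$ contributes $\sim\log n$ terms equal to $1$ before the geometric decay of $\tau$ takes over. This is why $s_n^2\asymp n\log n$ in Lemma~\ref{lem:FGtau} and why $\E|\hat\phi_{X,n}(u)-\phi_X(u)|^2\lesssim(\log n)/n$ (Lemma~\ref{lem:Ibratau}); the latter is exactly what, plugged into the analogue of \eqref{eq:VtoB}, produces the $K_1\log n$ multiplying $\|f-f_{A,m}\|^2$. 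Second, with $s_n^2\asymp n\log n$ and $\lambda\asymp\sqrt n\log n$ (the scale forced by $\kappa_n=1+\kappa\log n$, so that $\lambda^2/s_n^2\asymp\log n$ makes the sub-Gaussian part of Fuk--Nagaev equal to $n^{-\kappa^2/K_3}$), the argument of $\tau$ in the Fuk--Nagaev residual is $s_n^2/\lambda\asymp\sqrt n$, not $cn/\log n$ as you write. The mixing residual is therefore of order $n^{(2d+3)/2}a^{c\sqrt n}$ after integration over $[-n,n]^d$, which is what the paper obtains.
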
 
Explicit upper bounds for the constants $K_1,K_2, K_3, K_4$ and $b$ can be computed from the proof of Theorem \ref{thm:ADtau}. 
We infer from Theorem \ref{thm:ADtau} that  the resulting rates of convergence of $\hat f_{n}$ on
the Sobolev class $\mathcal S(A,\mathbf s,L)$ defined by \eqref{eq:Sob} are minimax optimal up to a $(\log n)^2$ term.

\begin{remark}
In case of subexponential decay, that is $\tau(n) \leq K a^{n^\gamma}$ for some $K\geq 1$, $a \in (0,1)$ and $\gamma \in (0, 1)$, one can take $\kappa_n= 1+\kappa (\log n)^{(1+\gamma)/2\gamma}$. This lead to the upper bound 
\begin{multline*}
\E[  \| \hat{f}_n- f\|^2 ]  \leq  \underset{m\in[0,n]^{d}, A \in {\mathcal A}}{\inf}\Bigg( K_1 (\log n)^{\frac 1\gamma}
\|f-f_{A,m}\|^{2} \\
\hspace{4cm}+\frac{\left (K_2 (\log n)^{\frac 1\gamma} +2(1+(\kappa+\sqrt{K_3})(\log n)^{(1+\gamma)/2\gamma})^{2}\right )|\mathrm{det}(A)|}{\pi^d n}m_{1}\cdots m_{d}  \Bigg)
\\  +\frac{2^7}{\pi^d} n^{d-\frac{\kappa^2}{K_3}} + K_4 b^{n^\gamma} 
\end{multline*}
for some  $b \in (a,1)$. We refer to \cite{CDM23} for  examples of sequences for which $\tau(n)$ decreases at a subexponential rate.
\end{remark}

To conclude this section, note that, in the case $d=1$,  Lerasle  \cite{lerasle2009adaptive} gave a  model selection result for a wavelet type estimator under the condition $\tau(k)=O(k^{-a})$ for  $a>6$ (see also his Remark 4.4 in case $a>4$).

 \subsection{Numerical study in the dependent case\label{sec:simdep}}
\paragraph{$\alpha$-mixing sequences}

We illustrate our adaptive procedure in dimension 1.
We expect that our empirical  procedure to select  $\kappa$ will adapt to the unknown value of $C_{\alpha}$.

To simulate $\alpha$-mixing sequences, we use the Markov chain introduced by Doukhan et al. (1994) \cite{doukhan1994functional}. The interest of this chain is  first that it is easy to simulate (as many Markov chains for which one can exhibit explicitly the iterated random system), and secondly we can compute exactly its rate of mixing from the parameters of the transition kernel. Since the chain is irreducible, positively recurrent ans aperiodic, it is also $\beta$-mixing in the sense of  Volkonskii and Rozanov (1959) \cite{volkonskii1959some}. Recall that, for such Markov chains, $\beta$-mixing is equivalent to $\alpha$-mixing. 

Let us now describe this Markov chain in more details. 
Let $a>1$ (as before $a$ will calibrate the rate of mixing), and let  $\mu$ and $\nu$ be the two probability measures on $[0,1]$ with respective densities 
$$f_\mu(x)=a x^{a-1}{\bf 1}_{0 \leq x \leq 1} \quad \text{and} \quad  f_\nu(x)=(a+1) x^{a}{\bf 1}_{0 \leq x \leq 1}\, .$$ Let $F_{\nu}$ be the cumulative distribution function of $\nu$, and let $Y_1$ be a random variable with law $\mu$. Let $(\varepsilon_j)_{j \geq 1}=((U_j, V_j))_{j \geq 2}$ be a sequence of i.i.d. random variables with uniform law over $[0,1]^2$ and independent of $Y_1$. For $j \geq 1$ define then
$$
Y_{j+1}=Y_{j} {\bf 1}_{U_{j+1} \geq Y_{j}}+F_\nu^{-1}(V_{j+1})  {\bf 1}_{U_{j+1} <Y_{j}} \, .
$$
It is proved in \cite{doukhan1994functional} that this chain is strictly stationary, with invariant distribution $\mu$ and that the $\beta$-mixing coefficients of this chain are exactly of order $n^{-a}$. Note also that $Y_j^a$ is uniformly distributed over $[0,1]$. To derive from this chain a sequence with invertible cumulative distribution function $F$, we take 
$X_j=F^{-1}(Y_j^a)$.
The sequence $(X_j)_{j\geq 1}$ is also a stationary Markov chain (as an invertible function of a stationary Markov chain), and its $\beta$-mixing coefficients are such that: there exist  two positive constants  $B>A>0$ such that, for any $n \geq 1$,  $$\frac{A}{n^{a}} \leq \beta(n) \leq \frac{B}{n^{a}}$$
(and the same is true for the coefficients $\alpha(n)$ for different constants $A,B$). \\

 As in Section \ref{sec:simiid} we evaluate the  normalized $\lk^{2}$-risks from $500$ independent datasets, for the sample sizes $n=500,\ 2000$ and 5000 and different mixing coefficients $a\in\{3,6,10\}$. Note that $a>3$ is the minimal value for which we proved that our estimator is consistent (without being necessarily optimal). It is then interesting to see if, numerically, our procedure still works at the boundary $a=3$. 
If $a>5$, Theorem \ref{thm:ADalpha} shows that our estimator reaches  the minimax rate of the i.i.d. case, up to a logarithmic term.

\begin{figure}
	\centering
	\begin{tabular}{cccc}		& $a=3$ & $a=6$  &  $a=10$ \\ 
		\begin{turn}{90}$n=500$\end{turn}&
		\includegraphics[scale=0.3]{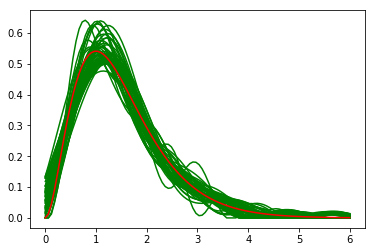}&\includegraphics[scale=0.3]{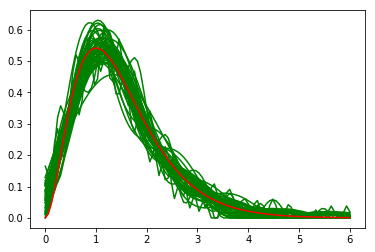}&  \includegraphics[scale=0.3]{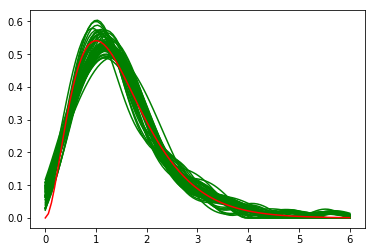} \\ 
	\begin{turn}{90}$n=2000$\end{turn}&	\includegraphics[scale=0.3]{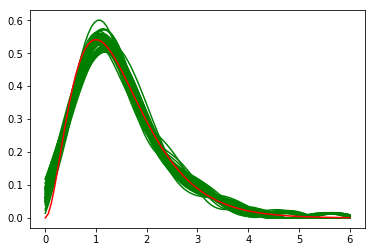}&\includegraphics[scale=0.3]{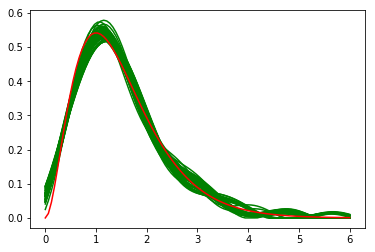}&  \includegraphics[scale=0.3]{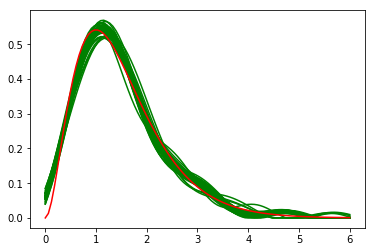} \\ 
	\begin{turn}{90}$n=5000$\end{turn}&	\includegraphics[scale=0.3]{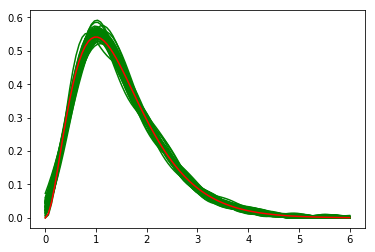}&\includegraphics[scale=0.3]{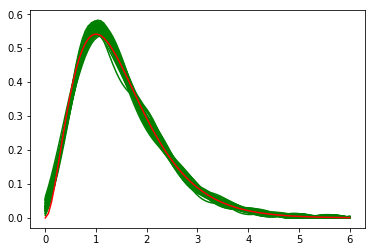}&  \includegraphics[scale=0.3]{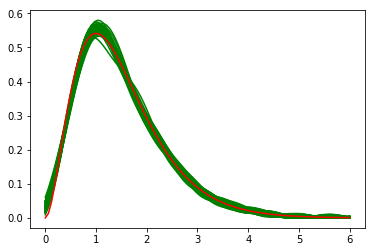} \\ 
	\end{tabular} 
	\caption{\footnotesize{Estimation of $f \sim \Gamma(3,2)$ (red) and the adaptive estimator (green) from $ 50$ Monte Carlo iterations  for different values of $a$ and $n$.
	\label{fig:gamma}}}
\end{figure}

\begin{table}\begin{center}
\begin{tabular}{c|c|c|c||c|c|c||c|c|c}
$n$& \multicolumn{3}{c||}{500} & \multicolumn{3}{c||}{2000} &  \multicolumn{3}{c}{5000}\\
\hline
$a$ &  3 & 6 & 10 &3 & 6 & 10& 3 & 6 & 10  \\
\hline
{100$\times$Risk } & $1.66$ & $1.37$  & 1.20  & 0.57 & 0.49 & 0.42 & 0.28 & 0.22 & 0.19\\
{100$\times\sqrt{\V}$(Risk) } &  {\footnotesize{$(0.88$)}} & {\footnotesize{$(0.65)$}} & {\footnotesize{$(0.59)$}} &  {\footnotesize{$(0.28)$}} &
 {\footnotesize{$(0.23)$}}  & {\footnotesize{$(0.20)$}} &  {\footnotesize{$(0.13)$}} &  {\footnotesize{$(0.10)$}} &  {\footnotesize{$(0.08)$}}\\
 \hline
\multirow{2}{*}{$\hat \kappa_{n}$} & 0.74 & 0.62 & 0.55& 0.72& 0.61& 0.57& 0.73&0.61& 0.46 \\
& {\footnotesize{$(0.26)$}} & {\footnotesize{$(0.22)$}} & {\footnotesize{$(0.20)$}} & {\footnotesize{$(0.24)$}} & {\footnotesize{$(0.22)$}}& {\footnotesize{$(0.23)$}}& {\footnotesize{$(0.27)$}} & {\footnotesize{$(0.23)$}}& {\footnotesize{$(0.19)$}} 
\end{tabular}
\caption{\footnotesize{Computations via $500$ Monte Carlo iterations  of the $\mathbb{L}^2$ risks for the Gamma distribution  $\Gamma(3,2)$. \label{tab1}}}\end{center}
\end{table}

\begin{figure}
	\centering
	\begin{tabular}{cccc}		& $a=3$ & $a=6$  &  $a=10$ \\ 
		\begin{turn}{90}$n=500$\end{turn}&
		\includegraphics[scale=0.3]{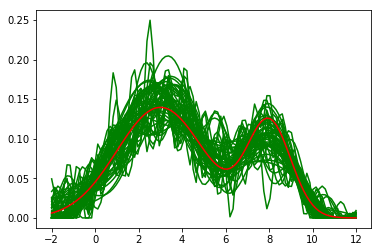}&\includegraphics[scale=0.3]{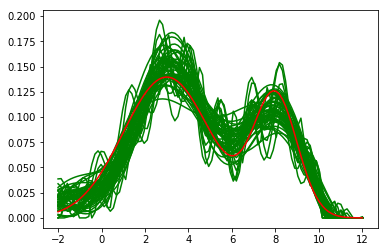}&  \includegraphics[scale=0.3]{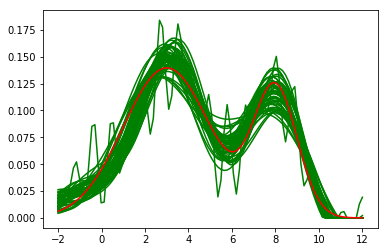} \\ 
	\begin{turn}{90}$n=2000$\end{turn}&	\includegraphics[scale=0.3]{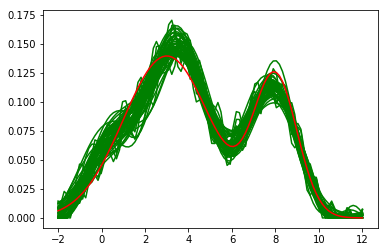}&\includegraphics[scale=0.3]{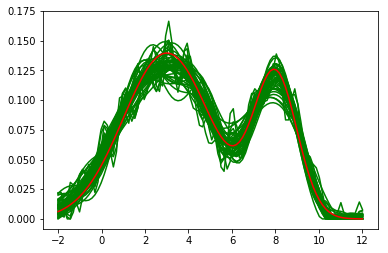}&  \includegraphics[scale=0.3]{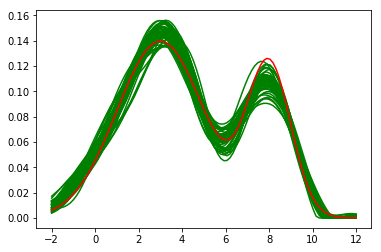} \\ 
	\begin{turn}{90}$n=5000$\end{turn}&	\includegraphics[scale=0.3]{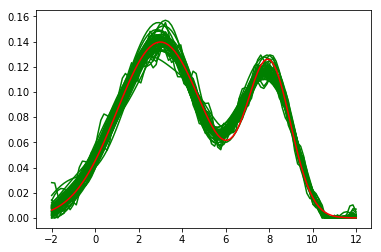}&\includegraphics[scale=0.3]{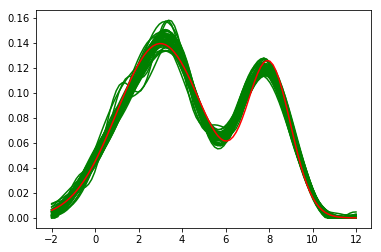}&  \includegraphics[scale=0.3]{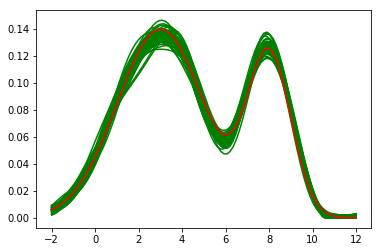} \\ 
	\end{tabular} 
	\caption{\footnotesize{Estimation of $f \sim 0.7\mathcal{N}(3,2) +0.3\mathcal{N}(8,1)$ (red) and the adaptive estimator (green) from $ 50$ Monte Carlo iterations  for different values of $a$ and $n$.
	\label{fig:NN}}}
\end{figure}

\begin{table}\begin{center}
\begin{tabular}{c|c|c|c||c|c|c||c|c|c}
$n$& \multicolumn{3}{c||}{500} & \multicolumn{3}{c||}{2000} &  \multicolumn{3}{c}{5000}\\
\hline
$a$ &  3 & 6 & 10 &3 & 6 & 10 &3 & 6 & 10  \\
\hline
{100$\times$Risk } & $3.32$ & $2.62$  & 2.44  & 0.91 & 0.72 & 0.65 & 0.51 & 0.38 & 0.34\\
{100$\times\sqrt{\V}$(Risk) } &  {\footnotesize{$(1.68$)}} & {\footnotesize{$(0.69)$}} & {\footnotesize{$(0.91)$}} &  {\footnotesize{$(0.43)$}} &
 {\footnotesize{$(0.34)$}}  & {\footnotesize{$(0.24)$}} &  {\footnotesize{$(0.23)$}} &  {\footnotesize{$(0.15)$}} &  {\footnotesize{$(0.13)$}}\\
 \hline
\multirow{2}{*}{$\hat \kappa_{n}$} & 0.76 & 0.69 & 0.68& 0.71& 0.62& 0.58& 0.76&0.67& 0.64 \\
& {\footnotesize{$(0.16)$}} & {\footnotesize{$(0.18)$}} & {\footnotesize{$(0.20)$}} & {\footnotesize{$(0.16)$}} & {\footnotesize{$(0.17)$}}& {\footnotesize{$(0.16)$}}& {\footnotesize{$(0.15)$}} & {\footnotesize{$(0.15)$}}& {\footnotesize{$(0.16)$}} 
\end{tabular}
\caption{\footnotesize{Computations by $500$ Monte Carlo iterations  of the $\mathbb{L}^2$ risks for the  distribution  $ 0.7\mathcal{N}(3,2) +0.3\mathcal{N}(8,1)$. }
	\label{tab2}}	\end{center}\end{table}

We consider two distributions  Gamma $\Gamma(3,2)$ and the mixture $0.7\mathcal{N}(3,2) +0.3\mathcal{N}(8,1)$.
On Figures \ref{fig:gamma} and \ref{fig:NN}, we observe  the behavior of $50$ adaptive estimators around the  true density distribution. We observe that even for values of $a$ smaller than 5 the estimator proposed is relevant and as expected we observe an improvement of the results when $a$ and/or $n$ increase. The risks empirically estimated in  Tables \ref{tab1} and \ref{tab2} confirm these observations. It is interesting to notice that for fixed values of $a$ the selected values for $\kappa$ are similar, even when $n$ increases, and that numerically we can  proceed  without knowing the constant $C_{\alpha}$.

\paragraph{$\tau$-mixing sequence} \label{sub3.3}
We consider a stationary Markov chain $(X_j)$  simulated  according to the following  auto-regressive mechanism:
$$\begin{cases}
X_0 &\sim\ \mathcal{U}\left[ 0,1 \right] \\
X_j&=\ \frac{1}{2}\left( X_{j-1}+\varepsilon_j\right) ,\quad j\ge 1,
\end{cases}$$
where $(\varepsilon_j)_{j\geq 1}$ are i.i.d., with common distribution $\mathcal{B}\left( \frac{1}{2} \right)$,  and  independent of $X_0$.
The $X_i$'s are uniformly distributed over $[0,1]$. 
To generate another stationary Markov chain with invariant cumulative distribution function $F$, we apply $F^{-{1}}$ to this sequence. 
It is well known that the Markov chain $(X_j)_{j \geq 0}$ is not $\alpha$-mixing 
in the sense of Rosenblatt \cite{rosenblatt1956central} (see for instance \cite{andrews1984non}). However, one can prove that other dependence coefficients, such as the $\tau$-dependence coefficients defined in Definition \ref{def tau}, converge to 0 at an exponential rate (an easy argument shows that it is still the case for the sequence $F^{-1}(X_i)$ for quantile functions $F^{-1}$ that are Lipschitz on any interval $I_\varepsilon=[\varepsilon, 1- \varepsilon]$, $\varepsilon >0$, provided the Lipschitz constant on $I_\varepsilon$ does not grows faster than $C\varepsilon^{-\gamma}$ for some $C>0, \gamma >0$).

\begin{figure}[H]
	\centering
	\begin{tabular}{ccc}		$n=500$ & $n=2000$  &  $n=5000$ \\ 
		
		\includegraphics[scale=0.3]{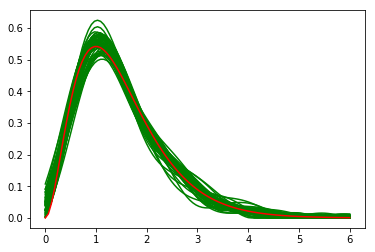}&\includegraphics[scale=0.3]{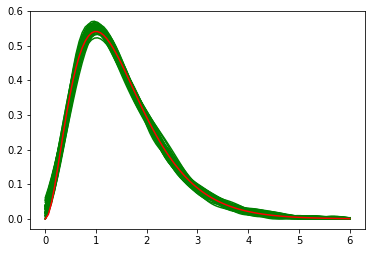}&  \includegraphics[scale=0.3]{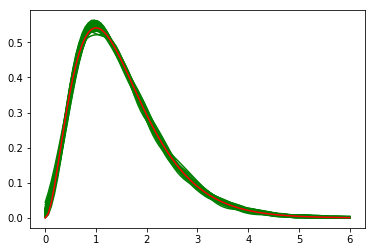} \\ 
	\end{tabular} 
	\caption{\footnotesize{Estimation of $f \sim \Gamma(3,2)$ (red) and the adaptive estimator (green) from $ 50$ Monte Carlo iterations  for different values of  $n$.
	\label{fig:ARgamma}}}
\end{figure}

\begin{table}[H]
\begin{center}	\begin{tabular}{c|c|c|c}
	$n$	& 	$500$ & $2000$ & $5000$ \\ 
		\hline 
		\multirow{2}{*}{Risk} &8.62 $\times 10^{-3}$ &  2.57 $\times 10^{-3}$&1.25 $\times 10^{-3}$ \\ 
		 &  \small{($5.40\times 10^{-3}$)}  &  \small{($1.39\times 10^{-3}$)} &  \small{(0.67$\times 10^{-3}$)}\\\hline
		\multirow{2}{*}{$\hat\kappa_{n}$}  & 0.12 &0.12& 0.13\\ 
		 &  \small{(0.06)}  &  \small{(0.05)} &  \small{(0.05)}\\
	\end{tabular} 
	\caption{\footnotesize{Computations via $500$ Monte Carlo iterations of the $\mathbb{L}^2$ risks for a Markov chain generated from the non mixing auto-regressive model with $f \sim \Gamma(3,2)$.   
	\label{tab5}}}\end{center}\end{table}

   \section{Proofs}\label{sec:prf}

\subsection{Proof of Theorem \ref{thm:AD}\label{sec:prfiid} }

\subsubsection{Preliminaries}

We start with two technical lemmas, an Hoeffding inequality for complex valued random variable and a lemma that enables to control  variance terms in the proof of Theorem \ref{thm:AD}.

\begin{lemma}\label{lem:Hoeffding}
Let $b>0$.  For all $u\in \R^{d},\ d\ge 1$, the following inequality holds
$$ \PP\left(|\hat \phi_{X,n}(u)-\phi_{X}(u)|\ge \frac{b \sqrt{\log n}}{\sqrt n}\right)\le 4 n^{-\frac{b^{2}}{4}}.$$
\end{lemma}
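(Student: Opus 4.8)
The plan is to reduce the deviation of the complex-valued quantity $\hat\phi_{X,n}(u)-\phi_X(u)$ to two real-valued deviations of sums of bounded i.i.d. variables, and then apply the classical Hoeffding inequality to each. Writing
$$
\hat\phi_{X,n}(u)-\phi_X(u)=\frac1n\sum_{j=1}^{n}\bigl(e^{i\langle u,X_j\rangle}-\phi_X(u)\bigr),
$$
I would separate real and imaginary parts: the real part is $\frac1n\sum_{j=1}^n\bigl(\cos\langle u,X_j\rangle-\Re\,\phi_X(u)\bigr)$ and the imaginary part is $\frac1n\sum_{j=1}^n\bigl(\sin\langle u,X_j\rangle-\Im\,\phi_X(u)\bigr)$. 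Note that the dimension $d\ge1$ plays no role here, since $\langle u,X_j\rangle$ is a scalar and each $e^{i\langle u,X_j\rangle}$ is a unit complex number; this is why the bound is uniform in $u\in\R^d$ for every $d$.

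Each summand $\cos\langle u,X_j\rangle$ (resp. $\sin\langle u,X_j\rangle$) takes values in $[-1,1]$ and, since the $X_j$ are i.i.d., these are independent and identically distributed with mean $\Re\,\phi_X(u)$ (resp. $\Im\,\phi_X(u)$). Hoeffding's inequality for the average of $n$ independent variables each confined to an interval of length $2$ then gives, for any $t>0$,
$$
\PP\Bigl(\bigl|\Re\bigl(\hat\phi_{X,n}(u)-\phi_X(u)\bigr)\bigr|\ge t\Bigr)\le 2\exp\Bigl(-\tfrac{n t^2}{2}\Bigr),
$$
and the identical bound for the imaginary part.

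Finally I would combine the two using the elementary observation that if $\bigl|\hat\phi_{X,n}(u)-\phi_X(u)\bigr|\ge s$ then at least one of the two parts must exceed $s/\sqrt2$ in absolute value (otherwise the squared modulus would be strictly less than $s^2/2+s^2/2=s^2$). A union bound thus yields
$$
\PP\Bigl(\bigl|\hat\phi_{X,n}(u)-\phi_X(u)\bigr|\ge s\Bigr)\le 4\exp\Bigl(-\tfrac{n s^2}{4}\Bigr).
$$
Taking $s=b\sqrt{\log n}/\sqrt n$, so that $n s^2=b^2\log n$ and $\exp(-n s^2/4)=n^{-b^2/4}$, gives exactly the claimed bound $4\,n^{-b^2/4}$. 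The only mildly delicate point is the constant bookkeeping: one must keep the factor $2$ from each Hoeffding bound and the $\sqrt2$ loss from the splitting so that the leading constant lands precisely on $4$ and the exponent on $-b^2/4$; everything else is routine.
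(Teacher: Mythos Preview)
Your proof is correct and follows exactly the same approach as the paper: split the complex deviation into its real and imaginary parts via the inequality $\PP(\sqrt{A^2+B^2}\ge x)\le \PP(|A|\ge x/\sqrt2)+\PP(|B|\ge x/\sqrt2)$, apply the classical Hoeffding inequality to each part (variables in $[-1,1]$), and collect the constants to obtain $4\exp(-b^2\log n/4)=4n^{-b^2/4}$.
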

\begin{proof}[Proof of Lemma \ref{lem:Hoeffding}]
We apply the Hoeffding inequality to the centered variable
$$
\hat \phi_{X,n}(u)-\phi_{X}(u)= \frac{1}{n}\sum_{j=1}^{n}\left(\cos(\langle u,X_{j}\rangle )-\mbox{Re}(\phi_{X}(u))\right)+i\frac{1}{n}\sum_{j=1}^{n}\left(\sin(\langle u,X_{j}\rangle )-\mbox{Im}(\phi_{X}(u))\right).
$$ 
By standard arguments, we first note that, for two real-valued random variables $A,B$ and $x>0$,
\begin{align}
{\mathbb P}\left (  \sqrt{A^2+B^2}\geq x \right ) \leq {\mathbb P}(|A|\geq x/\sqrt 2) + {\mathbb P}(|B|\geq x/\sqrt 2) \, .
\label{eq:84}
\end{align}
Using \eqref{eq:84}, we get 
\begin{align*}
\PP\left(|\hat \phi_{X,n}(u)-\phi_{X}(u)|\ge \frac{b \sqrt{\log n}}{\sqrt n}\right)\le &\PP\left(\left|\frac{1}{n}\sum_{j=1}^{n}\left(\cos(\langle u,X_{j}\rangle )-\mbox{Re}(\phi_{X}(u))\right)\right|\ge \frac{b \sqrt{\log n}}{\sqrt{2n}}\right)\\ &+\PP\left(\left|\frac{1}{n}\sum_{j=1}^{n}\left(\sin(\langle u,X_{j}\rangle )-\mbox{Im}(\phi_{X}(u))\right)\right|\ge \frac{b\sqrt{\log n}}{\sqrt {2n}}\right)\\
\le& 4\exp\left(-\frac{b^{2}\log n}{4}\right).
\end{align*}
\end{proof}

\begin{lemma}\label{lem:var} Let $m=(m_{1},\ldots,m_{d})\in(0,n]^d$. For any $\kappa>0$ and any $A \in {\mathcal A}$, the following inequality holds 
\begin{align}
\label{eq:UBprf21}
\frac{1}{(2\pi)^d}\int_{A([-m,m])}\E\big[|\tilde \phi_{X,n}(u)-\phi_{X}(u)|^{2}\big]{\rm d}u\le \frac{\left (5+(1+(\kappa+2)\sqrt{\log n})^{2}\right ) |\mathrm{det}(A)|}{\pi^d n}m_{1}\cdots m_{d}.
\end{align}
\end{lemma}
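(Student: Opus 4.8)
The plan is to reduce \eqref{eq:UBprf21} to the pointwise variance bound
$$
\E\big[|\tilde \phi_{X,n}(u)-\phi_{X}(u)|^{2}\big]\le \frac{5+(1+(\kappa+2)\sqrt{\log n})^{2}}{n},\qquad u\in\R^{d},
$$
and then integrate over $A([-m,m])$. Since $A\in{\mathcal A}$ is invertible, $\int_{A([-m,m])}\dd u=|\mathrm{det}(A)|\,2^{d}m_{1}\cdots m_{d}$, so the prefactor $(2\pi)^{-d}$ becomes $\pi^{-d}$ and the integrated bound is exactly the right-hand side of \eqref{eq:UBprf21}.

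To obtain the pointwise bound I would set $t=\kappa_{n}n^{-1/2}=(1+\kappa\sqrt{\log n})/\sqrt n$ and split according to the thresholding event. Because $\tilde\phi_{X,n}(u)=\hat\phi_{X,n}(u)$ on $\{|\hat\phi_{X,n}(u)|\ge t\}$ and $\tilde\phi_{X,n}(u)=0$ otherwise,
$$
\E\big[|\tilde\phi_{X,n}(u)-\phi_{X}(u)|^{2}\big]
=\E\big[|\hat\phi_{X,n}(u)-\phi_{X}(u)|^{2}\mathbf{1}_{\{|\hat\phi_{X,n}(u)|\ge t\}}\big]
+|\phi_{X}(u)|^{2}\,\PP\big(|\hat\phi_{X,n}(u)|<t\big).
$$
For the first term I drop the indicator and compute the variance of the empirical characteristic function directly: the summands $e^{i\langle u,X_{j}\rangle}-\phi_{X}(u)$ are i.i.d.\ and centered, with $\E|e^{i\langle u,X\rangle}-\phi_{X}(u)|^{2}=1-|\phi_{X}(u)|^{2}\le 1$, so the first term is at most $(1-|\phi_{X}(u)|^{2})/n\le 1/n$.

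The core of the argument is the second term. I would choose the deviation scale $s=2\sqrt{\log n}/\sqrt n$, so that $t+s=(1+(\kappa+2)\sqrt{\log n})/\sqrt n$, and argue by cases on the \emph{deterministic} quantity $|\phi_{X}(u)|$. If $|\phi_{X}(u)|<t+s$, then $|\phi_{X}(u)|^{2}<(t+s)^{2}=(1+(\kappa+2)\sqrt{\log n})^{2}/n$ while the probability factor is at most $1$. If $|\phi_{X}(u)|\ge t+s$, then $\{|\hat\phi_{X,n}(u)|<t\}\subseteq\{|\hat\phi_{X,n}(u)-\phi_{X}(u)|>s\}$, so Lemma \ref{lem:Hoeffding} applied with $b=2$ gives $\PP(|\hat\phi_{X,n}(u)-\phi_{X}(u)|>s)\le 4n^{-1}$, and using $|\phi_{X}(u)|\le 1$ this term is at most $4/n$. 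In either case the second term is bounded by $(1+(\kappa+2)\sqrt{\log n})^{2}/n+4/n$; adding the $1/n$ from the first term produces the constant $5$.

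The only delicate point is the bookkeeping of constants: the choice $b=2$ in Lemma \ref{lem:Hoeffding} is precisely what makes the residual probability $4n^{-b^{2}/4}=4n^{-1}$ contribute the ``$+4$'' while simultaneously shifting $\kappa$ to $\kappa+2$ inside $(t+s)^{2}$. I do not anticipate a genuine obstacle, only the care needed to keep the two roles of $s$ — the Gaussian-type probability decay and the size of the truncated contribution — consistent, and to recall that $\phi_{X}(u)$ is nonrandom, which is what reduces the second term to a single deterministic case split rather than an expectation over the event.
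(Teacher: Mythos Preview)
Your proposal is correct and matches the paper's proof essentially line for line: the same decomposition \eqref{eq:var1} via the thresholding event, the same case split on whether $|\phi_X(u)|$ exceeds $(1+(\kappa+2)\sqrt{\log n})/\sqrt n$, the same application of Lemma~\ref{lem:Hoeffding} with $b=2$ on the large-$|\phi_X|$ side, and the same volume computation for $A([-m,m])$. The only cosmetic difference is that the paper phrases the case split as a decomposition of the domain of integration while you phrase it as a pointwise dichotomy, but the arithmetic and constants are identical.
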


\begin{proof}[Proof of Lemma \ref{lem:var}]
First note that
\begin{equation}\label{eq:var1}
\E\big[|\tilde \phi_{X,n}(u)-\phi_{X}(u)|^{2}\big]=\E\big[|\hat \phi_{X,n}(u)-\phi_{X}(u)|^{2}\mathbf{1}_{|\hat\phi_{X,n}(u)|\geq\frac{\kappa_{n}}{\sqrt{n}}}\big]+|\phi_{X}(u)|^{2}\PP\Big(|\hat\phi_{X,n}(u)|< \frac{\kappa_{n}}{\sqrt{n}}\Big).
\end{equation}
 The first term in the right hand side is bounded by $\frac{1}{n}$.
Recall that $\kappa_{n}=1+\kappa\sqrt{\log n}$, we decompose the second term on the set $\{u,|\phi_{X}(u)|< \frac{1+(\kappa+2)\sqrt{\log n}}{\sqrt{n}}\}$ and its complementary where we derive from the triangle inequality that $$\mathbf{1}_{|\hat\phi_{X,n}(u)|< \frac{1+\kappa\sqrt{\log n}}{\sqrt{n}}}\mathbf{1}_{|\phi_{X}(u)|> \frac{1+(\kappa+2)\sqrt{\log n}}{\sqrt{n}}}\le \mathbf{1}_{|\hat\phi_{X,n}(u)-\phi_{X}(u)|\geq \frac{2\sqrt{\log n}}{\sqrt{n}}}.$$ This leads to
\begin{align}\label{eq:var2}
\hspace{-0.5cm}|\phi_{X}(u)|^{2}\PP\Big(|\hat\phi_{X,n}(u)|< \frac{\kappa_{n}}{\sqrt{n}}\Big)&
\leq \frac{(1+(\kappa+2)\sqrt{\log n})^{2}}{n}+\PP\Big(|\hat\phi_{X,n}(u)-\phi_{X}(u)|\geq \frac{2\sqrt{\log n}}{\sqrt{n}}\Big) \nonumber \\
&\leq  \frac{(1+(\kappa+2)\sqrt{\log n})^{2}}{n}+\frac 4 n \leq  \frac{4+(1+(\kappa+2)\sqrt{\log n})^{2}}{n}, 
\end{align} where we used  Lemma \ref{lem:Hoeffding} with $b=2$. To conclude the proof, it suffices to note that 
\begin{equation}\label{eq:detA}
\int {\mathbf 1}_{A([-m,m])}(u) {\rm d}u = |\mathrm{det}(A)|m_{1}\cdots m_{d} \, .
\end{equation}
\end{proof}

\subsection{Proof of Theorem \ref{thm:AD} } Let $m=(m_{1},\ldots,m_{d})\in(0,n]^d$. We first note that  $$ \E[\|\hat{f}_n-f\|^{2}]\le 2\|f_{A, m}-f\|^{2}+2\E[\|\hat{f}_n-f_{A, m}\|^{2}]= \frac2{(2\pi)^{d}}\int_{A([-m,m])^{c}}| \phi_{X}(u)|^{2}{\rm d}u+2\E[\|\hat f_n-f_{A,m}\|^{2}].$$
The first term is a bias term and we treat the second variance term using the Parseval equality to get that 
\begin{multline}2\E[\|\hat f_n-f_{A,m}\|^{2}]=\frac2{(2\pi)^{d}}\int_{\R^{d}}\E\big[|\tilde \phi_{X}(u)\mathbf{1}_{[-n,n]^{d}}(u)-\phi_{X}(u)\mathbf{1}_{A([-m,m])}(u)|^{2}\big]{\rm d}u  \\
=\frac2{(2\pi)^{d}}\int_{A([-m,m])}\E\big[|\tilde \phi_{X,n}(u)-\phi_{X}(u)|^{2}\big]{\rm d}u+\frac2{(2\pi)^{d}}\int_{[-n,n]^{d}\setminus A([-m,m])}\E\big[|\tilde \phi_{X,n}(u)|^{2}\big]{\rm d}u.
\label{eq:step1bis}\end{multline} 
The first term is a variance term which is controlled with \eqref{eq:UBprf21} for $\kappa>0$. In the sequel we focus on the second term. To that end notice that
\begin{align*}
|\tilde\phi_{X,n}|^2   \leq 2 |{\phi}_{X}|^2
+ 2 | \tilde{\phi}_{X,n}-\phi_X|^2.
\end{align*}From which we get
\begin{align*}
\frac2{(2\pi)^{d}}\int_{[-n,n]^{d}\setminus A([-m,m])}\E\big[|\tilde \phi_{X,n}(u)|^{2}\big]{\rm d}u\le& \frac4{(2\pi)^{d}}\int_{[-n,n]^{d}\setminus A([-m,m])}| \phi_{X}(u)|^{2}{\rm d}u\\ &+\frac4{(2\pi)^{d}}\int_{[-n,n]^{d}\setminus A([-m,m])}\E\big[|\tilde \phi_{X,n}(u)-\phi_{X}(u)|^{2}\big]{\rm d}u\\
\le& \frac4{(2\pi)^{d}}\int_{A([-m,m])^{c}}| \phi_{X}(u)|^{2}{\rm d}u\\ &+\frac4{(2\pi)^{d}}\int_{[-n,n]^{d}\setminus A([-m,m])}\E\big[|\tilde \phi_{X,n}(u)-\phi_{X}(u)|^{2}\big]{\rm d}u.
\end{align*} The first term is a bias term, and we treat the last one by considering the set
 $\{u, |\phi_{X}(u)|>n^{-1/2}\}$ and its complementary. On the set $\{ u, |\phi_X(u)| > n^{-1/2} \}$, we derive that 
\begin{align}\label{eq:VtoB}
 \E[|\tilde{\phi}_{X,n}(u)- \phi_X(u)|^2]&\leq|\phi_{X}(u)|^{2}+ \E[|\hat{\phi}_{X,n}(u)- \phi_X(u)|^2] \leq|\phi_{X}(u)|^{2}+\frac1n\leq 2|\phi_{X}(u)|^{2}.
 \end{align}
Consequently, we recover a bias term since \begin{align*}
\frac4{(2\pi)^{d}} \int_{[-n,n]^{d}\setminus A([-m,m])}&\E\big[|\tilde \phi_{X,n}(u)-\phi_{X}(u)|^{2}\big]\mathbf{1}_{\{ |\phi_X(u)| > n^{-1/2} \} } {\rm d}u\le \frac8{(2\pi)^{d}}\int_{A([-m,m])^{c}}|\phi_{X}(u)|^{2}{\rm d}u.
 \end{align*}
 Next on the complementary, using that $|\tilde{\phi}_{X,n}(u)- \phi_X(u)|^2\leq 4$ and the definition of $\tilde \phi_{X,n}$, we derive   that 
 \begin{multline}
\frac4{(2\pi)^{d}}\E  \Big[ \int_{[-n,n]^{d}\setminus A([-m,m])} \limits\hspace{-0.5cm}  |\tilde{\phi}_{X,n}(u)- \phi_X(u)|^2 \mathbf{1}_{\{|\phi_X(u)| \leq  n^{-1/2}\}}{\rm d}u\Big] \\
\leq   \frac4{(2\pi)^{d}} \int_{[-n,n]^{d}\setminus A([-m,m])}\limits  \hspace{-0.5cm}  |\phi_X(u)|^2 {\rm d}u  + \frac{16}{(2\pi)^{d}}  \int_{[-n,n]^{d}\setminus A([-m,m])}\limits \hspace{-0.5cm}\PP(|\hat\phi_{X,n}(u)|\geq \kappa_{n}n^{-1/2})\mathbf{1}_{\{|\phi_{X}(u)|\leq n^{-1/2}\}} {\rm d}u \\
\leq   \frac4{(2\pi)^{d}}\int_{A([-m,m])^{c} }\limits    |\phi_X(u)|^2 {\rm d}u + \frac{16}{(2\pi)^{d}}  \int_{[-n,n]^{d}}\limits \PP( |\hat{\phi}_{X,n}(u) - \phi_X(u)| > \kappa (\log n/n)^{1/2}  )   {\rm d}u\\   \leq \frac4{(2\pi)^{d}} \int_{A([-m,m])^{c} }\limits   |\phi_X(u)|^2 {\rm d}u +  \frac{64}{\pi^{d}}  n^{ d - \kappa^2/4}. \label{eq:VtoBH}
 \end{multline}
  The last inequality is a direct consequence of  Lemma \ref{lem:Hoeffding} with $b=\kappa$. 
 Gathering all terms it follows  
 that  for all $m\in(0,n]^{d}$,
\begin{align*}
\E[  \| \hat{f}_n- f\|^2 ]  \leq  18
\|f-f_{A, m}\|^{2}
+\frac{\left (10+2(1+(\kappa+2)\sqrt{\log n})^{2}\right ) |\mathrm{det}(A)| }{\pi^d n}m_{1}\cdots m_{d} +  \frac{64}{\pi^{d}}n^{d - \kappa^2/4}.    
\end{align*}
Taking the infimum over $m$ completes the proof.

\subsection{Proof of Equation \eqref{eq:VolDn}}
Firstly, note that an inductive argument allows to show that the density of $V_{d}=U_{1}\ldots U_{d}$, $d\ge 1$, where  $(U_{j})_{1\le j\le d}$ are i.i.d. $\mathcal{U}([0,1])$ random variables,  is given by
$$ g_{d}(z)=\frac{(-1)^{d}(\log z)^{d-1}}{(d-1)!}\mathbf{1}_{[0,1]}(z).$$
Secondly, we compute the quantity in \eqref{eq:VolDn} using the latter result and a change  of variables
\begin{align*}
\int_{D_{n}}{\rm d}u &= \int_{[-n,n]^{d}}\mathbf{1}_{|u_{1}\ldots u_{d}|\le n}{\rm d}u_{1}\ldots{\rm d}u_{d} = 2^{d} \int_{[0,n]^{d}}\mathbf{1}_{|u_{1}\ldots u_{d}|\le n}{\rm d}u_{1}\ldots{\rm d}u_{d}\\
&= 2^{d}n^{d} \int_{[0,1]^{d}}\mathbf{1}_{|u_{1}\ldots u_{d}|\le n^{1-d}}{\rm d}u_{1}\ldots{\rm d}u_{d} = (2n)^{d}\PP(V_{d}\le n^{1-d})\\
&=(2n)^{d}\int_{0}^{n^{1-d}}\frac{(-1)^{d}(\log z)^{d-1}}{(d-1)!}\mathbf{1}_{[0,1]}(z){\rm d}z\\
&=\frac{(2n)^{d}}{(d-1)!}\int_{\log(n^{d-1})}^{\infty}z^{d-1} e^{-z}{\rm d}z = \frac{(2n)^{d}}{(d-1)!}\Gamma(d, \log(n^{d-1}))
\end{align*} where we made the change of variable $z\leftarrow -\log(z)$ and $\Gamma(s,x)$ is the incomplete Gamma function. Using the {asymptotic }expansion $ \Gamma(s,x)\underset{x\to\infty}{\sim } x^{s-1 }e^{-x}$ we get
\begin{align*}
\int_{D_{n}}{\rm d}u &\underset{n\to\infty}{\sim }  \frac{(2n)^{d}}{(d-1)!}(\log(n^{d-1}))^{d-1}e^{-\log(n^{d-1})}= \frac{2^{d}(d-1)^{d-1}}{(d-1)!}\log(n)^{d-1}n.
\end{align*} 

\subsection{Proof of Equation \eqref{eq:Bf2} in Example \ref{ex:Ref}}

Let $d=2$, $b>1$ and $0<a<b(1-b)$, define $X=(bX_{1}, aX_{1}+bX_{2})$ where  $X_{1}$ and $X_{2}$ are independent, $X_{1}\sim\Gamma(\alpha+\frac12,1)$ and $X_{2}\sim\Gamma(\beta+\frac12,1)$, where $0<\beta<\alpha$. Denote by $f$ the density of $X$;   it is straightforward to check that $$|\mathcal Ff(u)|^{2}=\frac1{(1+|bu_{1}+au_{2}|^{2})^{\alpha+\frac12}}\frac1{(1+|bu_{2}|^{2})^{\beta+\frac12}}. $$ We can give a lower bound of the bias term, let $m_{1}\ge1$ and $m_{2}\ge1$,
\begin{align*}
B_{f}(m_1,m_2) 
&\geq  \iint |\mathcal Ff(u)|^{2}{\bf 1}_{u_{2}>m_2}  {\rm d}u_{1} {\rm d}u_{2} + \iint  |\mathcal Ff(u)|^{2}{\bf 1}_{u_{2} <-m_1}{\bf 1}_{u_1 >m_{1} } {\rm d}u_{1} {\rm d}u_{2}\\
&\geq c_\alpha\int_{m_2}^\infty \frac{1}{(1+b^{2}u_{2}^{2})^{\beta+\frac12}} {\rm d}u_{2} + \iint  |\mathcal Ff(u)|^{2}{\bf 1}_{u_{2} <-m_1}{\bf 1}_{u_1 >m_{1}  } {\rm d}u_{1} {\rm d}u_{2}\\
&\geq \frac{C_1}{m_2^{2\beta}} + \iint|\mathcal Ff(u)|^{2}{\bf 1}_{u_{2} <-m_1}{\bf 1}_{u_1 >m_{1}  } {\rm d}u_{1} {\rm d}u_{2}
\end{align*} where  we used that $(1+b^{2}u_{2}^{2})\le(1+bu_{2})^{2}$ and where we defined $c_{\alpha} = \int_{\R}(1+z^{2})^{-(\alpha+\frac12)} {\rm d}z$ and $C_{1}={c_{\alpha}}\big/({2\beta b(1+b)^{2\beta}})$ if $m_{2}\ge1$.
Now for the second term, 
\begin{align*}
\iint |\mathcal Ff(u)|^{2}&{\bf 1}_{u_{2} <-m_1}{\bf 1}_{u_1 >m_{1}  } {\rm d}u_{1} {\rm d}u_{2}\\&= \int_{-\infty}^{-m_1}\frac{1}{(1+|bu_{2}|^{2})^{\beta+\frac12}} \left ( \int \frac{1}{(1+|bu_{1}+au_{2}|^{2})^{\alpha+\frac12}}{\bf 1}_{u_1 >m_{1}  }{\rm d}u_{1}\right) {\rm d}u_{2}\\
&=\int_{-\infty}^{-m_1}\frac{1}{(1+|bu_{2}|^{2})^{\beta+\frac12}}  \left ( \int \frac{1}{(1+z^{2})^{\alpha+\frac12}}{\bf 1}_{z>am_1+by  } {\rm d}z\right) {\rm d}u_{2}\\
&\geq \int_{-\infty}^{-m_1}\frac{1}{(1+|bu_{2}|^{2})^{\beta+\frac12}}  \left ( \int \frac{1}{(1+z^{2})^{\alpha+\frac12}}{\bf 1}_{z>0  }{\rm d}z\right) {\rm d}u_{2}\\
&= \frac{c_\alpha}2  \int_{-\infty}^{-m_1}\frac{1}{(1+|bu_{2}|^{2})^{\beta+\frac12}}   {\rm d}u_{2} \geq \frac{C_1}{2m_1^{2\beta}}
\end{align*} where we used that $am_{1}+by<0$ for all $y<-m_{1}$ as $a-b<-b^{2}<0$ and $m_{1}\ge1$. Gathering all terms we derive that 
\begin{align}\label{eq:Bf}
B_{f}(m_1,m_2)\ge  \frac{c_{\alpha}}{2b\beta(1+b)^{2\beta}}\left(\frac{1}{m_2^{2\beta}}+ \frac{1}{2m_1^{2\beta}}\right)\, .
\end{align}
Now consider the matrix $A_{a,b}$ 
which is such that $$|\mathcal Ff(A_{a,b}u)|^{2}=\frac1{(1+|u_{1}|^{2})^{\alpha+\frac12}}\frac1{(1+|u_{2}|^{2})^{\beta+\frac12}}.$$ The associated bias term can be bounded as follows 
\begin{align}
B_{f_{A_{a,b}}}(m_1,m_2) 
&\leq \iint |\mathcal Ff(A_{a,b}u)|^{2}{\bf 1}_{|u_{1}|>m_1}  {\rm d}u_{1} {\rm d}u_{2} + \iint  |\mathcal Ff(A_{a,b}u)|^{2}{\bf 1}_{|u_{2}|>m_2} {\rm d}u_{1} {\rm d}u_{2}\nonumber\\
&=c_{\beta}\int_{m_{1}}^{\infty}\frac1{(1+u_{1}^{2})^{\alpha+\frac12}}  {\rm d}u_{1} +c_{\alpha}\int_{m_{2}}^{\infty}\frac1{(1+u_{2}^{2})^{\beta+\frac12}} {\rm d}u_{2}\nonumber \\
&\le c_{\beta}\int_{m_{1}}^{\infty}\frac1{u_{1}^{2\alpha+1}}  {\rm d}u_{1} +c_{\alpha}\int_{m_{2}}^{\infty}\frac1{u_{2}^{2\beta+1}} {\rm d}u_{2} = \frac{c_{\beta}}{2\alpha}\frac1{m_{1}^{2\alpha}}+ \frac{c_{\alpha}}{2\beta}\frac1{m_{2}^{2\beta}},\label{eq:BfA}
\end{align} 
where $c_{\beta}=\int_{\R}{(1+z^{2})^{-(\beta+\frac12)}} {\rm d}z $.
Gathering \eqref{eq:Bf} and \eqref{eq:BfA} we get for $\alpha>\beta$ the announced result \eqref{eq:Bf2} if $m_{1}\ge1\vee C^{1}_{\alpha,\beta,b},\ m_{2}\ge1\vee C^{2}_{\alpha,\beta,b} $ which are positive constants $C^{j}_{\alpha,\beta, b},\ j=1,2$ ensuring that the right hand side of \eqref{eq:Bf} is larger than \eqref{eq:BfA}.

\subsection{Proof of Theorem \ref{thm:ADalpha}}

\subsubsection{Preliminaries}

First, we prove several lemmas that are useful for the proof of Theorem \ref{thm:ADalpha}.  

\begin{lemma}\label{lem:Ibra} Let $C_{\alpha, n}=1+ 4\sum_{k=1}^{n-1} \alpha(k)$. The following inequality holds
\begin{equation*}
	\mathbb{E}\left[ |\hat{\varphi}_{X,n}(u)-\varphi_{X}(u) |^2\right]\ \leq \frac{(2C_{\alpha,n}-1)}n \, .
	\end{equation*}
\end{lemma}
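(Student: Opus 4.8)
The plan is to expand the squared $\lk^2$-error as a double sum of covariances, isolate the diagonal, and control the off-diagonal lags by a covariance inequality for $\alpha$-mixing sequences. First I would write $\hat\phi_{X,n}(u)-\phi_X(u)=\frac1n\sum_{j=1}^n Z_j$ with $Z_j=e^{i\langle u,X_j\rangle}-\phi_X(u)$; these are centred complex variables with $|Z_j|\le 2$, and by stationarity $\E|Z_1|^2=1-|\phi_X(u)|^2$. Expanding the squared modulus gives
$$
\E\big[|\hat\phi_{X,n}(u)-\phi_X(u)|^2\big]=\frac1{n^2}\sum_{j=1}^n\sum_{k=1}^n\E[Z_j\overline{Z_k}],
$$
whose diagonal part is $\frac1{n^2}\sum_{j=1}^n\E|Z_j|^2=\frac{1-|\phi_X(u)|^2}{n}\le\frac1n$.

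For the off-diagonal part I would pair the ordered terms $(j,k)$ and $(k,j)$. Since the left-hand side is real, each such pair contributes $\E[Z_j\overline{Z_k}]+\E[Z_k\overline{Z_j}]=2\,\Re(\E[Z_j\overline{Z_k}])$, so only the real parts survive; and by stationarity $\E[Z_1\overline{Z_{1+\ell}}]=:\gamma(\ell)$ depends only on the lag $\ell=|j-k|$, with exactly $n-\ell$ pairs at lag $\ell$. Hence
$$
\frac1{n^2}\sum_{j\ne k}\E[Z_j\overline{Z_k}]=\frac{2}{n^2}\sum_{\ell=1}^{n-1}(n-\ell)\,\Re(\gamma(\ell))\le\frac{2}{n}\sum_{\ell=1}^{n-1}\Re(\gamma(\ell)).
$$

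The crucial observation, and the step that pins down the constant, is that $\Re(\gamma(\ell))$ splits into two genuine covariances of $[-1,1]$-valued functions of a single observation. Since covariances are unaffected by centring,
$$
\Re(\gamma(\ell))=\Cov\big(\cos\langle u,X_1\rangle,\cos\langle u,X_{1+\ell}\rangle\big)+\Cov\big(\sin\langle u,X_1\rangle,\sin\langle u,X_{1+\ell}\rangle\big).
$$
Each summand is the covariance of an $\mathcal F_1$-measurable and a $\mathcal G_{1+\ell}$-measurable random variable bounded by $1$, so the covariance inequality for $\alpha$-mixing sequences in the normalisation of Definition \ref{def alpha} (see \cite{rio2017asymptotic}) bounds each by $2\alpha(\ell)$, whence $\Re(\gamma(\ell))\le 4\alpha(\ell)$. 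I would then assemble the pieces:
$$
\E\big[|\hat\phi_{X,n}(u)-\phi_X(u)|^2\big]\le\frac1n+\frac{8}{n}\sum_{\ell=1}^{n-1}\alpha(\ell)=\frac{1+8\sum_{k=1}^{n-1}\alpha(k)}{n}=\frac{2C_{\alpha,n}-1}{n},
$$
using $C_{\alpha,n}=1+4\sum_{k=1}^{n-1}\alpha(k)$.

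The main obstacle is purely in the bookkeeping of constants rather than in any deep estimate: one must notice that only the real part of the cross-terms contributes, and that this real part is a sum of \emph{two} one-coordinate covariances, each admitting the sharp bound $2\alpha(\ell)$. A naive application of a complex covariance inequality to $\Cov(e^{i\langle u,X_1\rangle},e^{i\langle u,X_{1+\ell}\rangle})$ would split into four terms and produce a factor $8\alpha(\ell)$, which is too lossy to match the exact constant $2C_{\alpha,n}-1$; the trigonometric decomposition is what recovers the factor $4$.
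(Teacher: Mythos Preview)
Your proof is correct and follows the same variance-expansion strategy as the paper: split into diagonal and off-diagonal terms, bound the diagonal by $1/n$, and control each lag-$\ell$ covariance by $4\alpha(\ell)$. The only difference is at the covariance step. The paper bounds the full modulus $|\Cov(e^{i\langle u,X_1\rangle},e^{i\langle u,X_{1+\ell}\rangle})|\le 4\alpha(\ell)$ directly, invoking the complex Ibragimov-type inequality (Rio, Chapter~1, Exercise~7) with constant $4$; you instead observe that only $\Re(\gamma(\ell))$ enters and decompose it as $\Cov(\cos,\cos)+\Cov(\sin,\sin)$, applying the real-valued Ibragimov inequality (constant $2$) twice. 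Your route is slightly more elementary in that it needs only the real version; the paper's is marginally shorter but relies on the sharper complex inequality. Both give the same constant, so your closing remark should be read as a comment on the \emph{naive} four-term splitting, not on the paper's actual argument: the factor-$4$ complex bound the paper uses is already sharp enough to recover $2C_{\alpha,n}-1$.
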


\begin{proof}[Proof of Lemma \ref{lem:Ibra}]
Note first that $$\mathrm{Var} (e^{i\langle u,X_1\rangle}) = \mathbb{E}(e^{i \langle u,X_1\rangle}e^{-i\langle u,X_1\rangle})-\mathbb{E}(e^{i \langle u,X_1\rangle})\mathbb{E}(e^{-i\langle u,X_1\rangle})  =1-|\varphi_X(u)|^2.$$ 
Using   the  stationarity  of the sequence $(X_i)$, we get   for the variance
	\begin{equation} \label{varcov}
	\mathbb{E}\left[ |\hat{\varphi}_{X,n}(u)-\varphi_X(u) |^2\right]
	 \leq \frac{1-|\varphi_X(u)|^2}{n}+ \frac{2}{n} \sum_{k=2}^n |\Cov (e^{i \langle u, X_1 \rangle},e^{i \langle u, X_k \rangle})|.
	\end{equation}
	We now apply Ibragimov's inequality \cite{Ibr62} : 
Let $X,Y$ be two  real-valued random variables that are almost surely bounded, and denote by $\|X\|_\infty$  the essential supremum norm of $|X|$. Then, it holds
\begin{align}\label{eq:Ibr}
|{\Cov} (X,Y)| \leq  2 \alpha( \sigma(X), \sigma(Y))     \|X \|_\infty   \| Y\|_ \infty  \, .
\end{align}
For complex-valued random variables, whose modulus are almost surely bounded, one can easily  prove that  (see for instance \cite{rio2017asymptotic}, Chapter 1, Exercice 7)
\begin{align} \label{cov}
|{\Cov} (X,Y)| \leq  4 \alpha( \sigma(X), \sigma(Y))     \|X \|_\infty   \| Y\|_ \infty  \,.
\end{align}
	Using  the inequality  \eqref{cov} and  that $|e^{i \langle u, X_k\rangle}|<1$, we get
	\begin{equation*}
	\sum_{k=2}^n |\Cov (e^{i\langle u, X_1\rangle},e^{i \langle u, X_k \rangle})| \leq 4 \sum_{k=2}^n \alpha(k-1).
	\end{equation*}
	It follows from \eqref{varcov} that,
	\begin{equation*} 
	\mathbb{E}\left[ |\hat{\varphi}_{X,n}(u)-\varphi_{X}(u) |^2\right]\ \leq  \frac{1}{n}  + \frac{8}{n} \sum_{k=1}^{n-1} \alpha(k)=\frac{(2C_{\alpha,n}-1)}n \, ,
	\end{equation*}
	and Lemma \ref{lem:Ibra} is proved.
\end{proof}

Our second lemma is a   Fuk-Nagaev inequality stated by Rio \cite{rio2017asymptotic}. We need a version of this inequality which was proposed as an exercise in  \cite{rio2017asymptotic} (Exercise 1, Chapter 6). 

\begin{lemma}\label{lem:FG}
Let $(Y_i)_{i>0}$ be a sequence of  real-valued random variables such that $\|Y_i\|_\infty \leq 1/2$,  and let  $(\alpha(n))_{n \geq 0}$ be the sequence of strong mixing coefficients of Definition \ref{def alpha}. Let $S_k = \sum_{i=1}^{k} (Y_i-\mathbb{E}(Y_{i}))$ and  $s_{n}^{2}$ be such that
\begin{equation}\label{def:sn}
s_n^2\geq   \sum_{i=1}^n \sum_{j=1}^{n}|\Cov(Y_i,Y_j) | .
\end{equation}
Then for any $\lambda \geq s_n$, it holds
\begin{equation*}\label{Fuk_nagaev}
\mathbb{P} \left (\sup_{k \in[1,n] }|S_k| \geq 4 \lambda   \right) \leq 4 \exp \left(-\frac{\lambda^2 \log 2}{2s_n^2}\right )+ 4n\lambda ^{-1} \alpha \left( \left[\frac{s_n^2}{\lambda} \right ]\right ).
\end{equation*}
\end{lemma}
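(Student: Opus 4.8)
The plan is to derive Lemma \ref{lem:FG} as a direct specialization of the general Fuk--Nagaev inequality for strongly mixing sequences established in Rio \cite{rio2017asymptotic}, Chapter 6, which is exactly what Exercise 1 there asks one to do. That general inequality provides, for the centred partial sums $S_k$ of variables whose centred versions are bounded by $1$ and for any real parameter $r\ge 1$, a bound of the form
\[
\PP\left(\sup_{k\in[1,n]}|S_k|\ge 4\lambda\right)\le 4\left(1+\frac{\lambda^2}{r s_n^2}\right)^{-r/2}+\frac{4n}{\lambda}\,\alpha\left(\left[\frac{\lambda}{r}\right]\right),
\]
valid as soon as $s_n^2$ dominates the total sum of covariances, i.e. under \eqref{def:sn}. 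Here the principal term is a polynomial (Rosenthal/moment) bound carrying the free exponent $r$, while the remainder is the cost of coupling over blocks of length $[\lambda/r]$.

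First I would check that the hypotheses of this general inequality are met. The assumption $\|Y_i\|_\infty\le 1/2$ guarantees that the centred summands $Y_i-\E(Y_i)$ are bounded by $1$, which fixes the normalisation and hence the explicit constants ($4$ and $4n/\lambda$) above; the definition \eqref{def:sn} of $s_n^2$ is precisely the required variance proxy.

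The key step is then the choice of the free parameter. I would take
\[
r=\frac{\lambda^2}{s_n^2},
\]
which is admissible because the standing assumption $\lambda\ge s_n$ forces $r\ge 1$. With this choice $\lambda^2/(r s_n^2)=1$, so the principal term becomes
\[
\left(1+\frac{\lambda^2}{r s_n^2}\right)^{-r/2}=2^{-r/2}=\exp\!\left(-\frac{r\log 2}{2}\right)=\exp\!\left(-\frac{\lambda^2\log 2}{2 s_n^2}\right),
\]
while $\lambda/r=s_n^2/\lambda$ turns the remainder into $\tfrac{4n}{\lambda}\,\alpha\!\left([\,s_n^2/\lambda\,]\right)$. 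Adding the two contributions yields exactly the asserted inequality.

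The main obstacle is not the substitution itself but correctly importing the general inequality with its sharp constants: one must track how the bound $\|Y_i\|_\infty\le 1/2$ enters both the block length $[\lambda/r]$ and the prefactor $4n/\lambda$, and one must ensure the inequality is available for a real exponent $r$ rather than only integer $r$. This last point matters because the two terms react oppositely to rounding $r=\lambda^2/s_n^2$: since $r\mapsto(1+\lambda^2/(rs_n^2))^{-r/2}$ is decreasing whereas $r\mapsto\alpha([\lambda/r])$ is nondecreasing, increasing $r$ improves the exponential term but worsens the $\alpha$-remainder, so only the exact real choice makes both terms land on the clean stated form simultaneously; if one insisted on integer $r$, a small additional monotonicity argument would be required.
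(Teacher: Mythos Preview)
Your proof is correct and follows exactly the same route as the paper: both invoke Rio's Fuk--Nagaev inequality (Theorem~6.2 in \cite{rio2017asymptotic}) with the quantile $Q\equiv 1$ coming from $\|Y_i-\E(Y_i)\|_\infty\le 1$, and both specialize via the choice $r=\lambda^2/s_n^2$ so that $(1+\lambda^2/(rs_n^2))^{-r/2}=2^{-\lambda^2/(2s_n^2)}$ and $\lambda/r=s_n^2/\lambda$. Your additional discussion of the real-versus-integer $r$ issue and the opposite monotonicities is a nice clarification but not needed for the argument.
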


\begin{proof}[Proof of Lemma \ref{lem:FG}]
We start from Theorem 6.2 Inequality (6.4) in \cite{rio2017asymptotic}, we take $r=\frac{\lambda^2}{s_n^2} $ and $Q=1$ (since  $\|Y_i-\mathbb{E}(Y_{i})\|_\infty \leq 1$). Using Rio's notations let
$$H\left(\frac{\lambda}{r}\right) = \alpha\left(\left[\frac{\lambda}{r}\right ]\right)  = \alpha\left(\left[\frac{s_n^2}{\lambda}\right ]\right).$$
We obtain that
\begin{align*}
\mathbb{P} \left(\sup_{k \in[1,n] }|S_k| \geq 4 \lambda   \right) &\leq 4\left( 1+\frac{\lambda^2}{r s_n^2}  \right) ^{-\frac{r}{2}} + 4n \lambda^{-1} \int_{0}^{H(  \lambda/r   )} Q(u) \,du \\
& \leq 4\left( 1+\frac{\lambda^2}{\frac{\lambda^2}{s_n^2}  s_n^2}  \right) ^{-\frac{\lambda^2}{2s_n^2}} + 4n \lambda^{-1} \int_{0}^{\alpha([s_n^2/\lambda])} 1 \, du \\
& \leq 4\times  2^{-\frac{\lambda^2}{2s_n^2}} + 4n \lambda^{-1} \alpha\left(\left[\frac{s_n^2}{\lambda}\right ]\right)\\
& \leq 4 \exp\left({-\frac{\lambda^2\log 2}{2s_n^2}}  \right)+ 4n \lambda^{-1} \alpha\left(\left[\frac{s_n^2}{\lambda}\right ]\right).
\end{align*}
and Lemma \ref{lem:FG} is proved.
\end{proof}

We  apply Lemma \ref{lem:FG} to $|\hat \phi_{X,n}(u)-\phi_{X}(u)|$. 

\begin{lemma}\label{lem:FG2} 
Recall that $C_{\alpha, n}= 1 + 4 \sum_{k=1}^{n-1} \alpha(k)$. 
For any  $b>0$, it holds
\begin{equation*}\label{eq:step1}
{\mathbb P}\left ( |\hat \phi_{X,n}(u)-\phi_{X}(u)| \geq \frac{b \sqrt{\log n}}{\sqrt n} \right )
\leq 8 n^{-b^2/93 C_{\alpha,n}} +
\frac{64 \sqrt{ 2n}}{b \sqrt{\log n}}
 \alpha\left(\left[\frac{2 \sqrt{ 2n} C_{\alpha,n} }{b \sqrt{\log n}}\right ]\right).
\end{equation*}
\end{lemma}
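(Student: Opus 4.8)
The plan is to reduce the statement to an application of the Fuk--Nagaev inequality of Lemma \ref{lem:FG} to the real and imaginary parts of $\hat\phi_{X,n}(u)-\phi_X(u)$ separately. First I would split the modulus via the elementary bound \eqref{eq:84}: with $x=b\sqrt{\log n}/\sqrt n$ one has
$$
{\mathbb P}\left(|\hat\phi_{X,n}(u)-\phi_X(u)|\geq x\right)\leq {\mathbb P}\left(|A|\geq \tfrac{x}{\sqrt2}\right)+{\mathbb P}\left(|B|\geq \tfrac{x}{\sqrt2}\right),
$$
where $A=\frac1n\sum_{j=1}^{n}(\cos(\langle u,X_j\rangle)-\Re\phi_X(u))$ and $B$ is the analogous sine average. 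Since the two terms are handled identically, it suffices to bound one of them and double the resulting constants at the end.

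To invoke Lemma \ref{lem:FG} I would write $A=\frac2n S_n$ with $S_n=\sum_{j=1}^{n}(Y_j-\mathbb{E}Y_j)$ and $Y_j=\frac12\cos(\langle u,X_j\rangle)$, so that $\|Y_j\|_\infty\leq 1/2$ as required; the event $\{|A|\geq x/\sqrt2\}$ then becomes $\{|S_n|\geq 4\lambda\}$ with $\lambda=b\sqrt{n\log n}/(8\sqrt2)$. The key preliminary computation is the variance proxy $s_n^2$ of \eqref{def:sn}: bounding the diagonal terms by $\mathrm{Var}(Y_j)\leq 1/4$ and the off-diagonal terms through Ibragimov's inequality \eqref{eq:Ibr}, which yields $|\Cov(Y_i,Y_j)|\leq \frac12\alpha(|i-j|)$, and then using $\sum_{i\neq j}\alpha(|i-j|)\leq 2n\sum_{k=1}^{n-1}\alpha(k)$ together with the definition of $C_{\alpha,n}$, one finds $\sum_{i,j}|\Cov(Y_i,Y_j)|\leq nC_{\alpha,n}/4$, so one may take $s_n^2=nC_{\alpha,n}/4$.

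It then remains to substitute these values into Lemma \ref{lem:FG}. A direct calculation gives $\lambda^2\log 2/(2s_n^2)=b^2(\log 2)\log n/(64\,C_{\alpha,n})$; since $\log 2/64>1/93$, the exponential term is at most $4n^{-b^2/(93C_{\alpha,n})}$, while $4n\lambda^{-1}=32\sqrt{2n}/(b\sqrt{\log n})$ and $s_n^2/\lambda=2\sqrt{2n}\,C_{\alpha,n}/(b\sqrt{\log n})$, which reproduce exactly the second term of the claim. Adding the bounds for $A$ and $B$ promotes the constants $4$ and $32$ to $8$ and $64$, giving the stated inequality.

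The one genuine obstacle is that Lemma \ref{lem:FG} requires $\lambda\geq s_n$, which here means $b\sqrt{\log n}\geq 4\sqrt2\sqrt{C_{\alpha,n}}$ and may fail for small $b$ or small $n$. I would dispose of this by a separate trivial argument: when $\lambda<s_n$ one has $b^2/C_{\alpha,n}<32/\log n$, hence $n^{-b^2/(93C_{\alpha,n})}>e^{-32/93}$ and $8e^{-32/93}>1$, so the asserted right-hand side already exceeds $1$ and the bound holds automatically. Apart from this case distinction, the proof is a bookkeeping of constants.
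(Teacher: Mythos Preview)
Your argument is correct and follows exactly the paper's approach: split via \eqref{eq:84}, apply Lemma~\ref{lem:FG} to $Y_j=\tfrac12\cos(\langle u,X_j\rangle)$ with $\lambda=b\sqrt{n\log n}/(8\sqrt2)$ and $s_n^2=nC_{\alpha,n}/4$ obtained from Ibragimov's inequality, and read off the constants. In fact you go slightly further than the paper by explicitly disposing of the constraint $\lambda\geq s_n$ from Lemma~\ref{lem:FG}, which the paper's proof applies without comment.
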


\begin{proof}[Proof of Lemma \ref{lem:FG2}]

Using \eqref{eq:84}, we infer that
\begin{align*}
{\mathbb P}\left ( |\hat \phi_{X,n}(u)-\phi_{X}(u)|>x \right ) \leq & \ \PP\left(\left|\frac{1}{n}\sum_{j=1}^{n}\left(\cos(\langle u,X_{j}\rangle )-\mbox{Re}(\phi_{X}(u))\right)\right|\ge \frac{x}{\sqrt 2}\right)\\ &+\PP\left(\left|\frac{1}{n}\sum_{j=1}^{n}\left(\sin(\langle u,X_{j}\rangle )-\mbox{Im}(\phi_{X}(u))\right)\right|\ge \frac{x}{\sqrt 2}\right) \, .
\end{align*}
Taking $\lambda=b \sqrt{n \log n}/8\sqrt 2$, one has 
\begin{align*}
{\mathbb P}\left ( |\hat \phi_{X,n}(u)-\phi_{X}(u)|> \frac{b \sqrt{\log n}}{\sqrt n} \right )\leq & \ \PP\left(\left|\frac{1}{2}\sum_{j=1}^{n}\left(\cos(\langle u,X_{j}\rangle )-\mbox{Re}(\phi_{X}(u))\right)\right|\ge 4 \lambda \right)\\ &+\PP\left(\left|\frac{1}{2}\sum_{j=1}^{n}\left(\sin(\langle u,X_{j}\rangle )-\mbox{Im}(\phi_{X}(u))\right)\right|\ge 4 \lambda \right) \, .
\end{align*}
We can now apply Lemma \ref{lem:FG} to the variables $Y_j= \cos(\langle u,X_{j}\rangle )/2$ (or $Y_j =\sin(\langle u,X_{j}\rangle )/2$), whose absolute values  are bounded by $1/2$. Note that, by \eqref{eq:Ibr}, one has 
$$
\sum_{i=1}^n \sum_{j=1}^n|\Cov(Y_i,Y_j) | \leq \frac n 4 \left (  1 + 4 \sum_{k=1}^{n-1} \alpha (k) \right ) = \frac n 4 C_{\alpha,n}  \, .
$$
Hence,  one can take 
$
s_n^2= n C_{\alpha,n} /4
$
in \eqref{def:sn}. It follows that
\begin{equation*}\label{eq:step1}
{\mathbb P}\left ( |\hat \phi_{X,n}(u)-\phi_{X}(u)|> \frac{b \sqrt{\log n}}{\sqrt n} \right )
\leq 8 \exp \left ( - \frac {b^2 \log n}{93 C_{\alpha,n}}\right ) + 
\frac{64 \sqrt{ 2n}}{b \sqrt{\log n}}
 \alpha\left(\left[\frac{2 \sqrt{ 2n} C_{\alpha,n} }{b \sqrt{\log n}}\right ]\right)\, ,
\end{equation*}
proving the lemma (the constant 93 comes from $64/\log 2 < 93$). 
\end{proof}

Using Lemmas  \ref{lem:Ibra} and \ref{lem:FG2}, we can now prove the last lemma of this subsection:
\begin{lemma}\label{lem:var2} 
Let $m=(m_{1},\ldots,m_{d})\in(0,n]^d$. For any $\kappa>0$ and any $A \in {\mathcal A}$, the following inequality holds 
\begin{multline*}
\frac{1}{(2\pi)^d}\int_{A([-m,m])}\E\big[|\tilde \phi_{X,n}(u)-\phi_{X}(u)|^{2}\big]{\rm d}u \\
\leq m_{1}\cdots m_{d}\frac{\left( (2C_{\alpha,n}+7)+(1+(\kappa+\sqrt {93 C_{\alpha,n}})\sqrt{\log n})^{2}\right)|\mathrm{det}(A)|}{\pi^d n}\\ +
\frac{64 \sqrt{ 2}n^{(2d+1)/2}}{\pi^d \sqrt {93 C_{\alpha,n}} \sqrt{\log n}}
 \alpha\left(\left[\frac{2 \sqrt{ 2n C_{\alpha,n}} }{\sqrt {93} \sqrt{\log n}}\right ]\right)  \, .\end{multline*}

\end{lemma}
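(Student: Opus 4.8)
The plan is to reproduce, essentially line by line, the proof of Lemma \ref{lem:var} from the independent case, replacing its two i.i.d.\ ingredients by their $\alpha$-mixing counterparts: the crude second-moment bound $\E[|\hat\phi_{X,n}(u)-\phi_X(u)|^2]\le 1/n$ will be replaced by Lemma \ref{lem:Ibra}, and the Hoeffding deviation bound of Lemma \ref{lem:Hoeffding} by the Fuk--Nagaev inequality of Lemma \ref{lem:FG2}. I would start from the exact decomposition \eqref{eq:var1},
$$
\E\big[|\tilde\phi_{X,n}(u)-\phi_X(u)|^2\big]=\E\big[|\hat\phi_{X,n}(u)-\phi_X(u)|^2\mathbf 1_{\{|\hat\phi_{X,n}(u)|\ge \kappa_n/\sqrt n\}}\big]+|\phi_X(u)|^2\,\PP\big(|\hat\phi_{X,n}(u)|<\kappa_n/\sqrt n\big),
$$
and bound the two terms pointwise in $u$ before integrating over $A([-m,m])$.

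For the first term I would drop the indicator and invoke Lemma \ref{lem:Ibra}, obtaining $(2C_{\alpha,n}-1)/n$ in place of the value $1/n$ used in the i.i.d.\ case. For the second term I would keep the splitting of the proof of Lemma \ref{lem:var}, but replace the fixed gap $2$ by $\sqrt{93C_{\alpha,n}}$: split the domain according to whether $|\phi_X(u)|\le (1+(\kappa+\sqrt{93C_{\alpha,n}})\sqrt{\log n})/\sqrt n$ or not. On the small part, bounding the probability by $1$ yields the contribution $(1+(\kappa+\sqrt{93C_{\alpha,n}})\sqrt{\log n})^2/n$. On the complementary part, the triangle inequality shows that whenever $|\hat\phi_{X,n}(u)|<\kappa_n/\sqrt n$ while $|\phi_X(u)|$ exceeds $(1+(\kappa+\sqrt{93C_{\alpha,n}})\sqrt{\log n})/\sqrt n$, one necessarily has $|\hat\phi_{X,n}(u)-\phi_X(u)|\ge \sqrt{93C_{\alpha,n}}\,\sqrt{\log n}/\sqrt n$; hence, using $|\phi_X(u)|^2\le 1$ and Lemma \ref{lem:FG2} with $b=\sqrt{93C_{\alpha,n}}$, this part is at most $8n^{-b^2/(93C_{\alpha,n})}=8/n$ plus the residual produced by the mixing term. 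The calibration $b=\sqrt{93C_{\alpha,n}}$ is the only genuinely delicate point, and I expect it to be the main obstacle: it is chosen precisely so that the exponential term of Lemma \ref{lem:FG2} collapses to order $1/n$, and this same choice is what forces the factor $\sqrt{93C_{\alpha,n}}$ to appear inside the squared bracket of the statement. Summing the three contributions and using $2C_{\alpha,n}-1+8=2C_{\alpha,n}+7$ gives, pointwise in $u$,
$$
\E\big[|\tilde\phi_{X,n}(u)-\phi_X(u)|^2\big]\le \frac{(2C_{\alpha,n}+7)+(1+(\kappa+\sqrt{93C_{\alpha,n}})\sqrt{\log n})^2}{n}+R_n,
$$
where $R_n=\tfrac{64\sqrt{2n}}{\sqrt{93C_{\alpha,n}}\,\sqrt{\log n}}\,\alpha\big(\big[\tfrac{2\sqrt{2nC_{\alpha,n}}}{\sqrt{93}\,\sqrt{\log n}}\big]\big)$ does \emph{not} depend on $u$.

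It then remains to integrate over $A([-m,m])$ and divide by $(2\pi)^d$. For the deterministic first summand I would use that $A([-m,m])$ has Lebesgue measure $2^d|\mathrm{det}(A)|\,m_1\cdots m_d$ (cf.\ \eqref{eq:detA}) together with $2^d/(2\pi)^d=\pi^{-d}$, which produces exactly the announced main term. For the $u$-independent residual $R_n$ one cannot recover the factor $m_1\cdots m_d$, so instead I would enlarge the domain of integration: since $A\in\mathcal A$ maps $[-n,n]^d$ into itself, one has $A([-m,m])\subseteq[-n,n]^d$ for every $m\in(0,n]^d$, whence $\tfrac{1}{(2\pi)^d}\int_{A([-m,m])}R_n\,\mathrm du\le \tfrac{(2n)^d}{(2\pi)^d}R_n=\tfrac{n^d}{\pi^d}R_n$, which equals the stated residual $\tfrac{64\sqrt2\,n^{(2d+1)/2}}{\pi^d\sqrt{93C_{\alpha,n}}\,\sqrt{\log n}}\,\alpha\big(\big[\tfrac{2\sqrt{2nC_{\alpha,n}}}{\sqrt{93}\,\sqrt{\log n}}\big]\big)$. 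The only work beyond the i.i.d.\ argument is therefore to carry the constant $C_{\alpha,n}$ through the two substitutions and to treat the residual by this coarser volume bound rather than the sharp one.
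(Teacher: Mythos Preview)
Your proposal is correct and follows essentially the same route as the paper's proof: start from the decomposition \eqref{eq:var1}, replace the i.i.d.\ second-moment bound by Lemma \ref{lem:Ibra}, split the second term with the gap $\sqrt{93C_{\alpha,n}}$ in place of $2$, apply Lemma \ref{lem:FG2} with $b=\sqrt{93C_{\alpha,n}}$ so that the exponential term contributes $8/n$, and finally bound the integral of the $u$-independent residual via $m_1\cdots m_d|\mathrm{det}(A)|\le n^d$ (the paper uses this inequality directly rather than your inclusion $A([-m,m])\subseteq[-n,n]^d$, but the two are equivalent here).
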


\begin{proof}[Proof of Lemma \ref{lem:var2}]
 Considering the set  $\{u,|\phi_{X}(u)|< \frac{1+(\kappa+\sqrt {93 C_{\alpha,n}})\sqrt{\log n}}{\sqrt{n}}\}$, proceeding as for \eqref{eq:var1}, \eqref{eq:var2}, and using Lemma \ref{lem:Ibra} and \eqref{eq:detA}, we get 
\begin{multline*}
\frac{1}{(2\pi)^d}\int_{A([-m,m])}\E\big[|\tilde \phi_{X,n}(u)-\phi_{X}(u)|^{2}\big]{\rm d}u \\
\leq m_{1}\cdots m_{d}\frac{\left (2C_{\alpha,n}-1)+(1+(\kappa+\sqrt {93 C_{\alpha,n}})\sqrt{\log n})^{2}\right )|\mathrm{det}(A)|}{\pi^d n}\\ +\frac{1}{(2\pi)^d}\int_{A([-m,m])}\PP\Big(|\hat\phi_{X,n}(u)-\phi_{X}(u)|\geq \frac{\sqrt {93 C_{\alpha,n}} \sqrt{\log n}}{\sqrt{n}}\Big){\rm d}u\, .
\end{multline*}
This together with Lemma \ref{lem:FG2} (with $b= \sqrt{93 C_{\alpha,n}})$ yield
\begin{multline*}
\frac{1}{(2\pi)^d}\int_{A([-m,m])}\E\big[|\tilde \phi_{X,n}(u)-\phi_{X}(u)|^{2}\big]{\rm d}u \\
\leq m_{1}\cdots m_{d}\frac{\left ((2C_{\alpha,n}+7)+(1+(\kappa+\sqrt {93 C_{\alpha,n}})\sqrt{\log n})^{2}\right )|\mathrm{det}(A)|}{\pi^d n}\\ +m_{1}\cdots m_{d} |\mathrm{det}(A)|\left (
\frac{64 \sqrt{ 2n}}{\pi^d \sqrt {93 C_{\alpha,n}} \sqrt{\log n}}
 \alpha\left(\left[\frac{2 \sqrt{ 2n C_{\alpha,n}} }{\sqrt {93} \sqrt{\log n}}\right ]\right) \right) \, ,\end{multline*}
 and Lemma  \ref{lem:var2} is proved (noting that $m_{1}\cdots m_{d} |\mathrm{det}(A)|\leq n^d$ for the last term).
\end{proof}
 
 \subsubsection{End of the proof of Theorem \ref{thm:ADalpha}}
 We follow exactly the proof of Theorem \ref{thm:AD} and  only indicate the changes resulting from the application of Lemma \ref{lem:FG2} and Lemma \ref{lem:var2} instead (respectively) of Lemma \ref{lem:Hoeffding} and Lemma \ref{lem:var}.
 
 The first change is that the first term in \eqref{eq:step1bis} is now controlled via Lemma \ref{lem:var2} instead of Lemma \ref{lem:var}.
 
 The second change concerns Inequality \eqref{eq:VtoB}, which becomes (applying Lemma \eqref{lem:Ibra}) : on the set $
 \{u,|\phi_X(u)| >  n^{-1/2}\}$, 
 \begin{align*}
 \E[|\tilde{\phi}_{X,n}(u)- \phi_X(u)|^2]&\leq|\phi_{X}(u)|^{2}+ \E[|\hat{\phi}_{X,n}(u)- \phi_X(u)|^2]\\
&  \leq|\phi_{X}(u)|^{2}+\frac{(2C_{\alpha,n} -1)}n \leq 2C_{\alpha,n}|\phi_{X}(u)|^{2}.
 \end{align*}
 
 The third  change concerns Inequality \eqref{eq:VtoBH}, which becomes (applying Lemma \ref{lem:FG2} with $b= \kappa$)
\begin{multline*}
 \frac{4}{(2\pi)^d}  \E  \Big[ \hspace{-0.5cm} \int_{[-n,n]^{d}\setminus A([-m,m])} \limits\hspace{-0.5cm}  |\tilde{\phi}_{X,n}(u)- \phi_X(u)|^2 \mathbf{1}_{\{|\phi_X(u)| \leq  n^{-1/2}\}}{\rm d}u\Big]\\
   \leq \frac{4}{(2\pi)^d} \int_{u\in A([m,m])^{c} }\limits\hspace{-0.5cm}   |\phi_X(u)|^2 {\rm d}u +  \frac{2^7 n^d}{\pi^d} n^{-\kappa^2/93 C_{\alpha,n}} + 
\frac{2^{10} \sqrt{2}n^{(2d+1)/2}}{\pi^d\kappa \sqrt{\log n}}
 \alpha\left(\left[\frac{2 \sqrt{ 2n} C_{\alpha,n} }{ \kappa \sqrt{\log n}}\right ]\right).
\end{multline*}

\subsection{Proof of Theorem \ref{thm:ADtau}}

\subsubsection{Preliminaries}

We state the modifications of Lemmas \ref{lem:Ibra}-\ref{lem:FG2} for $\tau$-mixing sequences whose coefficients decrease at an exponential rate. 

\begin{lemma}\label{lem:Ibratau} 
Assume that $\tau(n) \leq K a^n$ for some $K \geq 1$ and $a \in (0,1)$.
The following inequality holds
\begin{equation*}
	\mathbb{E}\left[ |\hat{\varphi}_{X,n}(u)-\varphi_{X}(u) |^2\right]\ \leq \frac{C_1(a,K)}n 
	+   \frac{2\log(n)}{n\log(1/a)}\, ,
	\end{equation*}
with $$C_1(a,K)= 1+\frac{2}{1-a} + \frac{2\log (2K\sqrt d)}{\log(1/a)} .$$
\end{lemma}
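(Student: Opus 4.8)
The plan is to mimic the proof of Lemma~\ref{lem:Ibra}, keeping its purely algebraic variance decomposition but replacing the covariance control (which rested on Ibragimov's inequality \eqref{cov} in the $\alpha$-mixing case) by the coupling bound carried by the $\tau$-mixing coefficients. The decomposition itself is valid for any stationary sequence: expanding $\hat\phi_{X,n}(u)-\phi_X(u)=\frac1n\sum_{j=1}^n(e^{i\langle u,X_j\rangle}-\phi_X(u))$ and grouping the $n^2$ terms by lag exactly as in \eqref{varcov} gives
\[
\E\big[|\hat\phi_{X,n}(u)-\phi_X(u)|^2\big]\le \frac{1-|\phi_X(u)|^2}{n}+\frac2n\sum_{k=2}^n\big|\Cov(e^{i\langle u,X_1\rangle},e^{i\langle u,X_k\rangle})\big|.
\]
Since $1-|\phi_X(u)|^2\le 1$, the diagonal term already produces the constant $1$ appearing in $C_1(a,K)$, so the whole task reduces to estimating the lagged covariances.

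For the covariances I would use two competing bounds. On the one hand, writing the covariance through conditional expectations and using that $e^{i\langle u,X_1\rangle}$ is $\mathcal F_1$-measurable with modulus $1$,
\[
\big|\Cov(e^{i\langle u,X_1\rangle},e^{i\langle u,X_k\rangle})\big|\le \E\Big|\E\big[e^{-i\langle u,X_k\rangle}\mid\mathcal F_1\big]-\E\big[e^{-i\langle u,X_k\rangle}\big]\Big|.
\]
Because the real and imaginary parts of $x\mapsto e^{-i\langle u,x\rangle}$ are $|u|_2$-Lipschitz on $(\mathbb R^d,|\cdot|_2)$, applying Definition~\ref{def tau} with $\ell=1$ to each part (so that $\tau(\mathcal F_1,X_k)\le\tau(k-1)$) yields $|\Cov|\le 2|u|_2\,\tau(k-1)\le 2K|u|_2\,a^{k-1}$. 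On the other hand, the trivial Cauchy--Schwarz bound $|\Cov|\le\sqrt{\V(e^{i\langle u,X_1\rangle})\,\V(e^{i\langle u,X_k\rangle})}\le 1$ always holds.

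It then remains to sum $\min\{1,\,2K|u|_2 a^{k-1}\}$ over $k$. Setting $j=k-1$ and $j^\ast=\log(2K|u|_2)/\log(1/a)$, I would use the bound $1$ for $j<j^\ast$ and the geometric bound for $j\ge j^\ast$; since $2K|u|_2 a^{j^\ast}=1$, the geometric tail contributes at most $1/(1-a)$, which gives $\sum_{k=2}^n|\Cov|\le \frac{\log(2K|u|_2)}{\log(1/a)}+\frac1{1-a}$. Finally, on the integration domain $u\in[-n,n]^d$ one has $|u|_2\le\sqrt d\,n$, hence $\log(2K|u|_2)\le\log(2K\sqrt d)+\log n$; inserting this into the decomposition reproduces exactly $\frac{C_1(a,K)}n+\frac{2\log n}{n\log(1/a)}$. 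The only genuinely delicate point is the covariance estimate: one must exploit the coupling interpretation of the $\tau$-coefficient for the $|u|_2$-Lipschitz functions $\cos\langle u,\cdot\rangle$ and $\sin\langle u,\cdot\rangle$, and then observe that the linear growth of the Lipschitz constant in $u$, combined with the restriction $|u|_2\le\sqrt d\,n$ and the geometric decay of $\tau$, is precisely what turns into the unavoidable $\log n$ factor.
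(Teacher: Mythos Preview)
Your proposal is correct and follows essentially the same route as the paper: the variance decomposition \eqref{varcov}, the covariance bound $|\Cov|\le\min\{1,\,2|u|_2\tau(k-1)\}$ obtained by conditioning and using the $|u|_2$-Lipschitz property of $\cos\langle u,\cdot\rangle$ and $\sin\langle u,\cdot\rangle$, and the cut at $j^\ast=\log(2K|u|_2)/\log(1/a)$ to sum the minima. The only cosmetic difference is that the paper substitutes $|u|_2\le\sqrt d\,n$ before summing, whereas you do it afterwards; both yield the same constants.
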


\begin{proof}[Proof of Lemma \ref{lem:Ibratau}]

We start from \eqref{varcov}  and  use the following covariance inequalities.
Let $X,Y$ be two  real-valued random variables (with $X$ almost surely bounded) and denote by $\|X\|_\infty$  the essential supremum norm of $|X|$ . Then
\begin{align*}
|{\Cov} (X,Y)| \leq  {\mathbb E}(|X ({\mathbb E}(Y|X)-{\mathbb E}(Y))|)\leq \|X\|_\infty \tau(\sigma(X), Y) \, .
\end{align*}
Similarly, for two  complex-valued random variables $X,Y$ (with $Y=Y_1+iY_2$ and $\|X\|_\infty < \infty$), one has
 \begin{align} \label{covtau}
|{\Cov} (X,Y)| \leq  \|X\|_\infty ( \tau(\sigma(X), Y_1) +\tau(\sigma(X), Y_2))\, .
\end{align}
Now, for the variable $Y_k=\cos(\langle u, X_k \rangle)$ (or $Y_k=\sin(\langle u, X_k \rangle)$), we easily see that (for $|u|\leq n$)
$$\tau(\sigma(X_1), Y_k)\leq |u|_2 \tau(k-1) \leq \sqrt d n \tau(k-1)\, .$$
Using  the inequality  \eqref{covtau} and   that $|e^{i \langle u, X_k\rangle}|<1$, we get
	\begin{equation*}
	\sum_{k=2}^n |\Cov (e^{i\langle u, X_1\rangle},e^{i \langle u, X_k \rangle})| \leq 2 \sum_{k=2}^n \min(2\sqrt d n\tau(k-1), 1).
	\end{equation*}
It follows from \eqref{varcov} that,
	\begin{equation*} 
	\mathbb{E}\left[ |\hat{\varphi}_{X,n}(u)-\varphi_{X}(u) |^2\right]\ \leq  \frac{1}{n}  + \frac{2}{n} \sum_{k=1}^{\infty} \min(2\sqrt d n\tau(k), 1)\leq  \frac{1}{n}  + \frac{2}{n} \sum_{k=1}^{\infty} \min(2\sqrt d nK a^k, 1) \, .
	\end{equation*}
Now, 
$$
\sum_{k=1}^{\infty} \min(2\sqrt d nK a^k, 1) \leq \frac{1}{1-a} + \frac{\log (2K\sqrt d)}{\log(1/a)} + \frac{\log(n)}{\log(1/a)} \, ,
$$
completing the proof of Lemma \ref{lem:Ibratau}
\end{proof}

Our second lemma is a   Fuk-Nagaev inequality.

\begin{lemma}\label{lem:FGtau}
Let $(Y_i)_{i>0}$ be a sequence of  real-valued random variables such that $\|Y_i\|_\infty \leq 1/2$  and let  $(\tau_Y(n))_{n \geq 0}$ be the sequence of $\tau$-mixing coefficients of Definition  \ref{def tau}. Define $S_k = \sum_{i=1}^{k} (Y_i-\mathbb{E}(Y_{i}))$ and  let $s_n^2$ be as in \eqref{def:sn}.
Then for any $\lambda \geq s_n$, 
\begin{equation*}\label{Fuk_nagaev}
\mathbb{P} \left (\sup_{k \in[1,n] }|S_k| \geq 5 \lambda   \right) \leq 4 \exp \left(-\frac{\lambda^2 \log 2}{2s_n^2}\right )+ 4n\lambda ^{-1} \tau_Y \left( \left[\frac{s_n^2}{\lambda} \right ]\right ).
\end{equation*}
\end{lemma}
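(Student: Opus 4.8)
The plan is to follow the proof of Lemma \ref{lem:FG} almost verbatim, simply replacing the $\alpha$-mixing Fuk--Nagaev inequality of Rio \cite{rio2017asymptotic} by its $\tau$-mixing counterpart, which is available in Dedecker and Prieur \cite{DP04}. That reference establishes, via the coupling characterization of the $\tau$-mixing coefficients (recall from \cite{dedecker2005new} that $\tau_Y$ controls a Kantorovich / $\lk^1$ coupling distance), a deviation bound for partial sums of bounded random variables of the form
\begin{equation*}
\mathbb{P}\left(\sup_{k\in[1,n]}|S_k|\geq 5\lambda\right)\leq 4\left(1+\frac{\lambda^2}{r\,s_n^2}\right)^{-r/2}+4n\lambda^{-1}\int_0^{\tau_Y([\lambda/r])}Q(u)\,\mathrm{d}u,
\end{equation*}
where $r\geq 1$ is a free parameter, $s_n^2$ bounds the sum of covariances as in \eqref{def:sn}, and $Q$ is the quantile function of $|Y_i-\mathbb{E}(Y_i)|$. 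The constant $5$ (in place of the $4$ of the $\alpha$-mixing case) is exactly the price of the coupling step: one couples the $Y_i$ with an independent block array up to an $\lk^1$-error controlled by $\tau_Y$, applies the classical Fuk--Nagaev bound to the coupled (nearly independent) sum, and absorbs the coupling error into one extra $\lambda$ via Markov's inequality.

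With this inequality at hand the computation is identical to the one carried out for Lemma \ref{lem:FG}. First I would take $r=\lambda^2/s_n^2$; the hypothesis $\lambda\geq s_n$ guarantees $r\geq 1$, which is what the source inequality requires. Since $\|Y_i\|_\infty\leq 1/2$ we have $\|Y_i-\mathbb{E}(Y_i)\|_\infty\leq 1$, so $Q\leq 1$ and one may take $Q\equiv 1$; then, using $\lambda/r=s_n^2/\lambda$, the dependence term reduces to $4n\lambda^{-1}\tau_Y([s_n^2/\lambda])$. For the first term, $1+\lambda^2/(r s_n^2)=2$ and the exponent is $-r/2=-\lambda^2/(2s_n^2)$, so it equals $4\cdot 2^{-\lambda^2/(2s_n^2)}=4\exp(-\lambda^2\log 2/(2s_n^2))$. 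Collecting the two contributions yields exactly the asserted bound
\begin{equation*}
\mathbb{P}\left(\sup_{k\in[1,n]}|S_k|\geq 5\lambda\right)\leq 4\exp\left(-\frac{\lambda^2\log 2}{2s_n^2}\right)+4n\lambda^{-1}\tau_Y\left(\left[\frac{s_n^2}{\lambda}\right]\right).
\end{equation*}

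The only genuine work --- and the main obstacle --- is to locate in \cite{DP04} a Fuk--Nagaev inequality in precisely the above shape, with the quantile integral $\int_0^{\tau_Y}Q$ as the dependence term and a constant $5$ in front of $\lambda$, rather than one of the several variants (Bennett-, Bernstein- or Rosenthal-type) proved there. If the available statement is phrased with different normalizations (for instance in terms of truncated quantile functions or of the associated coupling coefficients), I would carry out the elementary reductions matching those objects to $\tau_Y$ and to $Q\equiv 1$, and re-derive the maximal version (the $\sup_{k}$ rather than $|S_n|$ alone) through the coupling together with an Ottaviani/Etemadi-type maximal inequality. Once the source inequality is in hand, the specialization above is routine and mirrors the proof of Lemma \ref{lem:FG} line for line.
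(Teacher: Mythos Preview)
Your proposal is correct and coincides with the paper's own proof, which is literally a one-line reference: ``The proof is the same as that of Lemma~\ref{lem:FG} by using Theorem~2 in \cite{DP04} instead of Theorem~6.2 in \cite{rio2017asymptotic}.'' Your elaboration of the substitution $r=\lambda^2/s_n^2$, $Q\equiv 1$, and the identification of the two terms simply spells out the computation already carried out in Lemma~\ref{lem:FG}.
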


\begin{proof}[Proof of Lemma \ref{lem:FGtau}]
The proof is the same as that of Lemma \ref{lem:FG} by using Theorem 2 in \cite{DP04}
instead of  Theorem 6.2  in \cite{rio2017asymptotic}. 
\end{proof}

We now apply Lemma \ref{lem:FGtau} to $|\hat \phi_{X,n}(u)-\phi_{X}(u)|$.

\begin{lemma}\label{lem:FG2tau} 
Assume that $\tau(n) \leq K a^n$ for some $K\geq 1$ and $a \in (0,1)$. There exists a positive constant $C_2(a,K)$ such that,
for any  $b>0$, 
\begin{equation*}\label{eq:step1}
{\mathbb P}\left ( |\hat \phi_{X,n}(u)-\phi_{X}(u)|> \frac{b \log n}{\sqrt n} \right )
\leq 8 \exp \left ( - \frac {b^2 \log n}{578 C_2(a,K)}\right ) + 
\frac{ n^{3/2} 80\sqrt{2d} K}{b \log n} a^{[C_2(a,K)10\sqrt{2n}/b]} \, .
\end{equation*}
\end{lemma}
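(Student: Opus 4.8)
The plan is to replicate the proof of Lemma \ref{lem:FG2}, replacing the strong-mixing Fuk--Nagaev inequality (Lemma \ref{lem:FG}) by its $\tau$-mixing counterpart (Lemma \ref{lem:FGtau}) and the variance estimate of Lemma \ref{lem:Ibra} by that of Lemma \ref{lem:Ibratau}. First I would use \eqref{eq:84} to bound $\PP(|\hat\phi_{X,n}(u)-\phi_X(u)|>b\log n/\sqrt n)$ by the sum of the two probabilities for the real and imaginary parts, and rewrite each as a deviation of $S_n=\sum_{j=1}^n(Y_j-\E Y_j)$ with $Y_j=\cos(\langle u,X_j\rangle)/2$ (resp.\ $\sin(\langle u,X_j\rangle)/2$), so that $\|Y_j\|_\infty\le 1/2$ as required by Lemma \ref{lem:FGtau}. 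Since $\tfrac1n\sum_j(\cos(\langle u,X_j\rangle)-\Re\phi_X(u))=\tfrac2n S_n$, the per-part threshold $b\log n/\sqrt{2n}$ becomes $|S_n|\ge b\sqrt n\log n/(2\sqrt2)$, which I match to the left-hand side $5\lambda$ of Lemma \ref{lem:FGtau} by setting $\lambda=b\sqrt n\log n/(10\sqrt2)$.

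Next I would fix the admissible variance proxy $s_n^2$. Following the covariance computation in the proof of Lemma \ref{lem:Ibratau} (inequality \eqref{covtau} together with $\tau(\sigma(X_1),Y_k)\le\sqrt d\,n\,\tau(k-1)$ for $|u|\le n$ and the trivial bound $1$), stationarity gives $\sum_{i,j=1}^n|\Cov(Y_i,Y_j)|\le \tfrac n4\big(1+2\sum_{k\ge1}\min(\sqrt d\,nKa^k,1)\big)$, and the elementary estimate $\sum_{k\ge1}\min(\sqrt d\,nKa^k,1)\le \tfrac1{1-a}+\tfrac{\log(K\sqrt d)}{\log(1/a)}+\tfrac{\log n}{\log(1/a)}$ lets me absorb the constant terms into the logarithm (using $\log n\ge\log2$) and define $C_2(a,K)$ so that $s_n^2:=nC_2(a,K)\log n$ satisfies \eqref{def:sn}. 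One then checks that $\lambda\ge s_n$ holds as soon as $b^2\log n\ge 200\,C_2(a,K)$; in the complementary range the exponential term $8\exp(-b^2\log n/(578C_2))$ already exceeds $1$, so the asserted inequality is trivially true and Lemma \ref{lem:FGtau} need only be invoked when $\lambda\ge s_n$.

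With these choices the two terms of Lemma \ref{lem:FGtau} give the two terms of the statement. The exponent is $\lambda^2\log2/(2s_n^2)=b^2\log n\log2/(400C_2)$, and using $400/\log2<578$ together with the factor $2$ from adding the cosine and sine contributions produces $8\exp(-b^2\log n/(578C_2))$. For the second term, $s_n^2/\lambda=10\,C_2\sqrt{2n}/b$, which is exactly the argument of $a^{[\cdot]}$ in the statement, while the prefactor $4n\lambda^{-1}$ combined with the factor $2$ yields the claimed $n^{3/2}/(b\log n)$ dependence and the numerical constant.

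The main obstacle is the control of the $\tau$-mixing coefficients $\tau_Y$ of the transformed sequence $(Y_j)$ that must be fed into Lemma \ref{lem:FGtau}: unlike $\alpha$-mixing, $\tau$-mixing is not hereditary, so I cannot simply replace $\tau_Y$ by $\tau$. Instead I would use that $Y_j=g(X_j)$ with $g(\cdot)=\cos(\langle u,\cdot\rangle)/2$ Lipschitz of constant at most $\sqrt d\,n$ on $[-n,n]^d$, that passing to the smaller $\sigma$-algebra $\sigma(Y_i:i\le k)\subseteq\sigma(X_i:i\le k)$ only decreases the coefficient, and that post-composition with an $L$-Lipschitz map scales the Kantorovich-type coefficient by $L$; this yields $\tau_Y(k)\le \sqrt d\,n\,\tau(k)\le \sqrt d\,n\,Ka^k$. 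The factor $\sqrt d\,n$, which grows with $n$, is precisely what forces the exponential decay hypothesis $\tau(n)\le Ka^n$: only then does $a^{[10C_2\sqrt{2n}/b]}$ dominate the polynomial prefactor $n^{3/2}$, explaining why, in contrast with the $\alpha$-mixing Lemma \ref{lem:FG2}, the statement is restricted to geometrically $\tau$-mixing sequences.
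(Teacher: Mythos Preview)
Your proposal is correct and follows essentially the same route as the paper: split into real and imaginary parts via \eqref{eq:84}, set $\lambda=b\sqrt n\log n/(10\sqrt2)$, take $s_n^2=C_2(a,K)\,n\log n$ from the covariance bound of Lemma~\ref{lem:Ibratau}, and apply Lemma~\ref{lem:FGtau}. You are in fact more careful than the paper on two points it leaves implicit---the verification of the constraint $\lambda\ge s_n$ (handled by noting the bound is trivially $\ge1$ otherwise) and, crucially, the heredity step $\tau_Y(k)\le \sqrt d\,n\,\tau(k)$ needed to pass from the $\tau$-coefficients of $(Y_j)$ in Lemma~\ref{lem:FGtau} back to those of $(X_j)$, which is exactly what produces the factor $\sqrt d\,nK$ and hence the $n^{3/2}\sqrt{2d}\,K$ in the second term.
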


\begin{proof}[Proof of Lemma \ref{lem:FG2tau}]
Arguing as in Lemma \ref{lem:FG2} and taking
$\lambda=b \sqrt{n} \log n/10\sqrt 2$, one has 
\begin{align*}
{\mathbb P}\left ( |\hat \phi_{X,n}(u)-\phi_{X}(u)|> \frac{b \log n}{\sqrt n} \right )\leq & \ \PP\left(\left|\frac{1}{2}\sum_{j=1}^{n}\left(\cos(\langle u,X_{j}\rangle )-\mbox{Re}(\phi_{X}(u))\right)\right|\ge 5 \lambda \right)\\ &+\PP\left(\left|\frac{1}{2}\sum_{j=1}^{n}\left(\sin(\langle u,X_{j}\rangle )-\mbox{Im}(\phi_{X}(u))\right)\right|\ge 5 \lambda \right) \, .
\end{align*}
We can now apply Lemma \ref{lem:FG} to the variables $Y_j= \cos(\langle u,X_{j}\rangle )/2$ (or $Y_j =\sin(\langle u,X_{j}\rangle )/2$), whose absolute values  are bounded by $1/2$. 
Proceeding as in the proof of Lemma \ref{lem:Ibratau}, we get
\begin{align*}
\sum_{i=1}^n \sum_{j=1}^n|\Cov(Y_i,Y_j) | &\leq \frac n 4 \left (  1 + 2 \sum_{k=1}^{n-1} \min(\sqrt d nK a^k, 1) \right )\\
& \leq  \frac n 4 \left (  1+\frac{2}{1-a} + \frac{2\log (K\sqrt d)}{\log(1/a)} +   \frac{2\log(n)}{\log(1/a)} \right ) \\
& \leq C_2(a,K) n \log n \, .
\end{align*}
Hence,  one can take 
$
s_n^2= C_2(a,K) n \log n
$
in \eqref{def:sn}. It follows that
\begin{equation*}\label{eq:step1}
{\mathbb P}\left ( |\hat \phi_{X,n}(u)-\phi_{X}(u)|> \frac{b \log n}{\sqrt n} \right )
\leq 8 \exp \left ( - \frac {b^2 \log n}{578 C_2(a,K)}\right ) + 
\frac{ n^{3/2} 80\sqrt{2d} K}{b \log n} a^{[C_2(a,K)10\sqrt{2n}/b]} \, ,
\end{equation*}
proving the lemma (the constant 578 comes from  $400/\log 2 < 578$). 
\end{proof}

Using Lemmas  \ref{lem:Ibratau} and \ref{lem:FG2tau}, we can now prove the last lemma of this subsection.
\begin{lemma}\label{lem:var2tau} 
Let $m=(m_{1},\ldots,m_{d})\in(0,n]^d$, and assume that $\tau(n) \leq K a^n$ for some $K\geq 1$ and $a \in (0,1)$. For any $\kappa>0$ and any $A \in {\mathcal A}$, the following inequality holds 
\begin{multline*}
\frac{1}{(2\pi)^d}\int_{A([-m,m])}\E\big[|\tilde \phi_{X,n}(u)-\phi_{X}(u)|^{2}\big]{\rm d}u \\
\leq m_{1}\cdots m_{d}\frac{\left (8+C_1(a,K)+2\log n/\log (1/a)+(1+(\kappa+\sqrt {578 C_2(a,K)})\log n)^{2}\right )|\mathrm{det}(A)|}{\pi^d n}\\ +
\frac{ n^{(2d+3)/2} 80\sqrt{d} K}{\pi^d \sqrt {289 C_2(a,K)} \log n} a^{[\sqrt{C_2(a,K)}10\sqrt{2n}/\sqrt {578}]} \, .
\end{multline*}

\end{lemma}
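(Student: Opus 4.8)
The plan is to follow the proof of Lemma \ref{lem:var2} verbatim, substituting the two $\tau$-mixing ingredients already prepared above: the variance bound of Lemma \ref{lem:Ibratau} in place of Lemma \ref{lem:Ibra}, and the Fuk--Nagaev inequality of Lemma \ref{lem:FG2tau} in place of Lemma \ref{lem:FG2}. The only structural change is that the threshold is now calibrated with $\kappa_n=1+\kappa\log n$, a full logarithm rather than its square root, so every occurrence of $\sqrt{\log n}$ in the independent/$\alpha$-mixing computation is replaced by $\log n$.

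I would start from the purely algebraic decomposition \eqref{eq:var1}, which remains valid without any independence assumption:
$$\E\big[|\tilde \phi_{X,n}(u)-\phi_{X}(u)|^{2}\big]=\E\big[|\hat \phi_{X,n}(u)-\phi_{X}(u)|^{2}\mathbf{1}_{|\hat\phi_{X,n}(u)|\geq \kappa_n/\sqrt n}\big]+|\phi_{X}(u)|^{2}\PP\big(|\hat\phi_{X,n}(u)|< \kappa_n/\sqrt n\big).$$
After dropping the indicator, the first summand is at most $\E[|\hat\phi_{X,n}(u)-\phi_X(u)|^2]$, which Lemma \ref{lem:Ibratau} bounds by $C_1(a,K)/n+2\log n/(n\log(1/a))$; this accounts for the contributions $C_1(a,K)$ and $2\log n/\log(1/a)$ in the bracket of the statement. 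For the second summand I would reproduce \eqref{eq:var2}, now with the intermediate level set to $\{u:|\phi_X(u)|<(1+(\kappa+\sqrt{578 C_2(a,K)})\log n)/\sqrt n\}$. On this set one uses $|\phi_X(u)|^2\le(1+(\kappa+\sqrt{578 C_2(a,K)})\log n)^2/n$, giving the squared term, while on its complement the triangle inequality yields, as in \eqref{eq:var2},
$$\mathbf{1}_{|\hat\phi_{X,n}(u)|<\kappa_n/\sqrt n}\,\mathbf{1}_{|\phi_{X}(u)|> (1+(\kappa+\sqrt{578 C_2(a,K)})\log n)/\sqrt n}\le \mathbf{1}_{|\hat\phi_{X,n}(u)-\phi_{X}(u)|\geq \sqrt{578 C_2(a,K)}\log n/\sqrt n}.$$

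The whole point of this choice of intermediate level is that the resulting deviation has size $b\log n/\sqrt n$ with $b=\sqrt{578 C_2(a,K)}$, which is exactly the scale at which the exponential part of Lemma \ref{lem:FG2tau} equals $8\exp(-\log n)=8/n$, contributing the constant $8$ to the bracket, while its $\tau$-tail part is the per-$u$ residual $\frac{n^{3/2}80\sqrt{2d}K}{b\log n}a^{[C_2(a,K)10\sqrt{2n}/b]}$. Finally I would integrate all per-$u$ estimates over $A([-m,m])$, multiply by $(2\pi)^{-d}$, and invoke \eqref{eq:detA} to turn each bracketed term into $|\mathrm{det}(A)|m_1\cdots m_d/(\pi^d n)$; for the $\tau$-tail term I would additionally bound the integration volume by $m_1\cdots m_d|\mathrm{det}(A)|\le n^d$, which multiplies the per-$u$ residual by $n^d$ and produces the announced $n^{(2d+3)/2}$. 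Substituting $b=\sqrt{578 C_2(a,K)}$ and simplifying $80\sqrt{2d}/\sqrt{578}=80\sqrt d/\sqrt{289}$ together with $C_2(a,K)/b=\sqrt{C_2(a,K)/578}$ then gives precisely the stated last term.

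The computation is essentially bookkeeping, since all the heavy lifting has been isolated in Lemmas \ref{lem:Ibratau} and \ref{lem:FG2tau}. The one genuine subtlety to watch is that, relative to the $\alpha$-mixing case, each of these two lemmas carries an extra factor $\log n$ (from the logarithmic variance inflation in Lemma \ref{lem:Ibratau} and from $s_n^2=C_2(a,K)n\log n$ in Lemma \ref{lem:FG2tau}); this is exactly what forces the $\log n$ calibration of the threshold through $\kappa_n=1+\kappa\log n$, and is the mechanism ultimately responsible for the $(\log n)^2$ loss noted after Theorem \ref{thm:ADtau}.
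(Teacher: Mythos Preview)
Your proposal is correct and follows essentially the same route as the paper: decompose via \eqref{eq:var1}, bound the first summand by Lemma~\ref{lem:Ibratau}, split the second on the level set $\{|\phi_X(u)|<(1+(\kappa+\sqrt{578\,C_2(a,K)})\log n)/\sqrt n\}$, apply Lemma~\ref{lem:FG2tau} with $b=\sqrt{578\,C_2(a,K)}$ so the exponential term equals $8/n$, integrate using \eqref{eq:detA}, and bound the residual integration volume by $n^d$. The algebraic simplifications you record for the final term are exactly those the paper uses.
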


\begin{proof}[Proof of Lemma \ref{lem:var2tau}]
 Considering the set  $\{u,|\phi_{X}(u)|< \frac{1+(\kappa+\sqrt {578 C_2(a,K)})\log n}{\sqrt{n}}\}$, proceeding as for \eqref{eq:var1}, \eqref{eq:var2}, and using Lemma \ref{lem:Ibratau}, we get 
\begin{multline*}
\frac{1}{(2\pi)^d}\int_{A([-m,m])}\E\big[|\tilde \phi_{X,n}(u)-\phi_{X}(u)|^{2}\big]{\rm d}u \\
\leq m_{1}\cdots m_{d}\frac{\left(C_1(a,K)+2\log n/\log (1/a)+(1+(\kappa+\sqrt {578 C_2(a,K)})\log n)^{2}\right )|\mathrm{det}(A)|}{\pi^d n}\\ +\frac{1}{(2\pi)^d}\int_{A([-m,m])}\PP\Big(|\hat\phi_{X,n}(u)-\phi_{X}(u)|\geq \frac{ \sqrt {578 C_2(a,K)} \log n}{\sqrt{n}}\Big){\rm d}u\, .
\end{multline*}
This together with Lemma \ref{lem:FG2tau} (with $b= \sqrt {578 C_2(a,K)})$ yield
\begin{multline*}
\frac{1}{(2\pi)^d}\int_{A([-m,m])}\E\big[|\tilde \phi_{X,n}(u)-\phi_{X}(u)|^{2}\big]{\rm d}u \\ \leq 
m_{1}\cdots m_{d}\frac{\left (8+C_1(a,K)+2\log n/\log (1/a)+(1+(\kappa+\sqrt {578 C_2(a,K)})\log n)^{2}\right)|\mathrm{det}(A)|}{\pi^d n}\\ +m_{1}\cdots m_{d}|\mathrm{det}(A)|\left ( \frac{n^{3/2} 80\sqrt{d} K}{\pi^d\sqrt {289 C_2(a,K)} \log n} a^{[\sqrt{C_2(a,K)}10\sqrt{2n}/\sqrt {578}]}
\right) \, ,\end{multline*}
 and Lemma  \ref{lem:var2tau} is proved since $m_{1}\cdots m_{d} |\mathrm{det}(A)|\leq n^d$.
\end{proof}
 
 \subsubsection{End of the proof of Theorem \ref{thm:ADtau}}
 We follow exactly the proof of Theorem \ref{thm:AD} (with $\kappa_n=1 + \kappa \log n$), we  only indicate the changes resulting from the application of Lemma \ref{lem:FG2tau} and Lemma \ref{lem:var2tau} instead (respectively) of Lemma \ref{lem:Hoeffding} and Lemma \ref{lem:var}.
 
 The first change is that the first term in \eqref{eq:step1bis} is now controlled via Lemma \ref{lem:var2tau} instead of Lemma \ref{lem:var}.
 
 The second change concerns Inequality \eqref{eq:VtoB}, which becomes (applying Lemma \ref{lem:Ibratau}) : on the set $
 \{u,|\phi_X(u)| >  n^{-1/2}\}$,
 \begin{multline*}
 \E[|\tilde{\phi}_{X,n}(u)- \phi_X(u)|^2]\leq|\phi_{X}(u)|^{2}+ \E[|\hat{\phi}_{X,n}(u)- \phi_X(u)|^2]\\
  \leq|\phi_{X}(u)|^{2}+\frac{C_1(a,K)}n 
	+   \frac{2\log(n)}{n\log(1/a)} \leq \left(1+C_1(a,K)+ \frac{2\log(n) }{\log(1/a)}\right )|\phi_{X}(u)|^{2}.
 \end{multline*}
 
 The third  change concerns Inequality \eqref{eq:VtoBH}, which becomes (applying Lemma \ref{lem:FG2tau} with $b= \kappa$)
\begin{multline*}
 \frac{4}{(2\pi)^d}  \E  \Big[ \hspace{-0.5cm} \int_{[-n,n]^{d}\setminus A([-m,m])} \limits\hspace{-0.5cm}  |\tilde{\phi}_{X,n}(u)- \phi_X(u)|^2 \mathbf{1}_{\{|\phi_X(u)| \leq  n^{-1/2}\}}{\rm d}u\Big]\\
   \leq \frac{4}{(2\pi)^d} \int_{u\in A([m,m])^{c} }\limits\hspace{-0.5cm}   |\phi_X(u)|^2 {\rm d}u +  \frac{2^7 n^d}{\pi^d} n^{-\kappa^2/578 C_2(a,K)} + 
\frac{ n^{(2d+3)/2} 2^8\sqrt{2d} 5K}{\pi^d \kappa \log n} a^{[C_2(a,K)10\sqrt{2n}/\kappa]}\, .
\end{multline*}

\medskip

\noindent{\bf Acknowledgments.} The second author was partially supported by the french ANR project ``G\'eométrie dans les donn\'ees : inf\'erence statistique \& partitionnement – GeoDSIC"

{\small
\bibliographystyle{apalike}
\bibliography{ADD}
}


%
\end{document}